\newtheorem{prop}{Proposition}[section]
\newtheorem{definition}[prop]{Definition}
\newtheorem{conj}[prop]{Conjecture}
\newtheorem{lem}[prop]{ Lemma}
\newtheorem{thm}[prop]{Theorem}
\newtheorem{cor}[prop]{Corollary}
\newtheorem{remar}[prop]{Remark}
\newcommand{\ZZ}{{\mathbb Z}}
\newcommand{\one}{ \Romannum {1}}
\newcommand{\two}{\Romannum {2}}
\newcommand{\three}{ \Romannum {3} }
\begin{document}
\title[On zero-sum subsequences of length exp(G)]{\small {On zero-sum subsequences of length exp(G)}
}

\title{On zero-sum subsequences of length exp(G)}
\author{Karthikesh}
\address{KARTHIKESH
\newline
\ \ INDIAN INSTITUTE OF SCIENCE EDUCATION AND RESEARCH, TRIVENDRUM, INDIA.}
\email{karthikeshkh214@iisertvm.ac.in}

\author{SriLakshmi Krishnamoorty}
\address{SRILAKSHMI KRISHNAMOORTHY
 \newline
INDIAN INSTITUTE OF SCIENCE EDUCATION AND RESEARCH, TRIVENDRUM, INDIA.}
\email{srilakshmi@iisertvm.ac.in}

\author{Umesh Shankar}
\address{UMESH SHANKAR
\newline
\ \ INDIAN INSTITUTE OF SCIENCE EDUCATION AND RESEARCH, TRIVENDRUM, INDIA.} 
\email{umeshshankar14@iisertvm.ac.in}

\begin{abstract}
Let $G$ be a finite abelian group.  Let $g(G)$ be the smallest positive integer $t$ such that every subset of
cardinality $t$ of the group $G$  contains a subset of cardinality $\mathrm{exp}(G)$ whose sum
is zero. In this paper, we show that if X is a subset of $\mathbb{Z}^2_{2n}$ with cardinality $4n+1$ and $2n$ or $2n-1$ elements of $X$ have the same first coordintes, then $X$ contains a zero sum subset. As an application of our results we prove that $g(\mathbb{Z}^2_6) = 13.$ This settles
Gao-Thangaduri's conjecture for the case $n=6.$ We also prove some results towards the general even $n$ cases of the conjecture. 
\end{abstract}
\subjclass[2010]{Primary: 11B75, Secondary: 20K99}
\keywords{Zero-Sum, Finite abelian groups}
\maketitle
\section{Introduction}\label{Introduction}
Let $G$ be a finite abelian group. We know that $G \simeq \mathbb{Z}_{n_1} \oplus \mathbb{Z}_{n_2}
\oplus \cdots \mathbb{Z}_{n_d}$ with $1 <  n_1 < n_2  < \cdots < n_d,$ where $n_d = \mathrm{exp}(G).$
\begin{definition}
By $g(G)$ we denote the smallest positive integer $t$ such that every subset $X$ of $G$ with
$|X| \geq t$ contains a subset $S$ of $X$ with $|S| = \mathrm{exp}(G)$ which satisfies
$$\sum_{s \in S} s = 0_G.$$
\end{definition}
Kemnitz 
 \cite{K82} proved that $g(\mathbb{Z}^2_p) = 2p - 1$ for $p = 3, 5, 7.$
 It is known from the work of Gao-Thangadurai that  $g(\mathbb{Z}^2_p) = 2p - 1$ for primes
 $ p \geq 47$ and $g(\mathbb{Z}^2_4)= 9.$ 
For odd $n$, consider the subset $A = \{ (0,0), (0,1),\cdots,(0,n-2), (1,1), (1,2),\cdots,(1,n-1) \}$
of $\mathbb{Z}^2_n.$ Then $|A| =  2n - 2$ and $A$ does not contain any zero-sum subset of cardinality
$n.$  Hence $g(\mathbb{Z}^2_n) \geq 2n - 1.$ For even $n$, consider the subset $B = \{ (0,0), (0,1),\cdots, (0, n-1), (1,0), (1,1), (1,2),\cdots,(1,n-1) \}$
of $\mathbb{Z}^2_n.$ Then $|B| =  2n$ and $B$ does not contain any zero-sum subset of cardinality
$n.$  Hence $g(\mathbb{Z}^2_n) \geq 2n +1.$ 
\\
Gao and Thangadurai conjectured the following \cite{GT04}
\begin{conj}\label{main-conjecture}
\begin{equation*}
g(\mathbb{Z}^2_n)= 
\begin{cases}
2n-1 & \text{if n is odd,} \\
2n+1 &  \text{if $n$ is even.} 
\end{cases}
\end{equation*}
\end{conj}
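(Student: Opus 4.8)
The plan is to establish the two upper bounds matching the lower bounds already produced by the sets $A$ and $B$; the conjecture then follows. Concretely, one must show that every $X\subseteq \mathbb{Z}^2_n$ with $|X|=2n-1$ (for odd $n$), respectively $|X|=2n+1$ (for even $n$), contains a subset $S$ of size $\mathrm{exp}(\mathbb{Z}^2_n)=n$ with $\sum_{s\in S}s=0$. The fundamental tool throughout is the Erd\H{o}s--Ginzburg--Ziv theorem: any sequence of $2m-1$ elements of $\mathbb{Z}_m$ has a zero-sum subsequence of length exactly $m$, equivalently $g(\mathbb{Z}_m)=2m-1$. EGZ dispatches the one-dimensional coordinates, so the whole difficulty lies in synchronising the two coordinates while keeping the chosen elements distinct.

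First I would reduce to a case analysis driven by the multiplicities of the first coordinate. Write $\pi\colon \mathbb{Z}^2_n\to\mathbb{Z}_n$ for the projection to the first factor and let $m_a$ be the number of elements of $X$ lying in the fibre over $a$. When no $m_a$ is too large, the fibres are spread out: one repeatedly applies EGZ inside and across fibres to produce several blocks whose first-coordinate sums already vanish, and then applies EGZ once more to the second coordinates of these blocks to force the second coordinate to vanish as well, assembling a zero-sum set of the correct size. This is the generic regime. The approach degenerates precisely when one fibre is heavy, carrying nearly $\mathrm{exp}(G)$ of the elements, since then there is no longer enough room across distinct fibres to run the block argument; this is exactly the configuration isolated by the key lemma for $\mathbb{Z}^2_{2n}$, where $2n$ or $2n-1$ of the $4n+1$ elements share a first coordinate.

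To prove that key lemma I would first observe that, because the elements of $X$ are distinct, the $2n$ (or $2n-1$) elements sharing a first coordinate $a$ must have distinct second coordinates, so the heavy fibre $Y$ is the \emph{whole} fibre $\{a\}\times\mathbb{Z}_{2n}$ (respectively the whole fibre with one point deleted). Writing $Z=X\setminus Y$, I would build a zero-sum set of size $2n$ by taking a small, carefully chosen sub-collection of $Z$ together with enough points of $Y$ to reach size $2n$. The first-coordinate equation constrains how many and which points of $Z$ may be used, since every point of $Y$ contributes the same first coordinate $a$; the second-coordinate equation is then met using the freedom inside $Y$, via the elementary fact that for $2\le k\le 2n-2$ the sums of $k$ distinct elements of $\mathbb{Z}_{2n}$ cover all of $\mathbb{Z}_{2n}$. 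The main obstacle, and the heart of the argument, is to make the first coordinate vanish with a count that leaves the number $k$ of fibre-points used inside this usable range, and simultaneously to handle the slightly less flexible case $|Y|=2n-1$ together with the extreme values of $k$, where the residues attainable from $Y$ thin out and a separate small analysis of the admissible distributions of $Z$ is needed.

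Finally, for the concrete assertion $g(\mathbb{Z}^2_6)=13$ (that is, $2n=6$ and $|X|=13=4\cdot 3+1$), I would combine the two regimes: the key lemma disposes of every $X$ with $|X|=13$ in which some first coordinate occurs five or six times, and the generic fibre argument, sharpened by the small size of the group so that a finite and explicit verification becomes feasible, disposes of all remaining distributions, in which each fibre carries at most four points. Together with the lower bound $g(\mathbb{Z}^2_6)\ge 13$ furnished by the set $B$, this yields equality and settles the conjecture for $n=6$.
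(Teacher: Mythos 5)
The statement you are proving is stated in the paper as a \emph{conjecture}: the paper itself establishes it only for $n=6$, together with two partial results for general even $n$ (the heavy-fibre theorems). Your heavy-fibre analysis does track the paper's actual results: your reduction to a full or once-punctured fibre, the use of the fact that sums of $k$ distinct elements of a large subset of $\mathbb{Z}_{2n}$ cover the whole group, and the observation that the punctured case $|Y|=2n-1$ needs a separate analysis of which lengths $k$ are attainable all correspond to the paper's Theorems \ref{Main Theorem 1} and \ref{Main Theorem 2}, Proposition \ref{prop1}, and Lemma \ref{non zero lemma}. So that part of your plan is sound in outline.

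The genuine gap is the ``generic regime'' where no fibre is heavy, and the specific mechanism you propose there would fail. You suggest extracting several blocks whose first-coordinate sums vanish and then applying Erd\H{o}s--Ginzburg--Ziv to the second coordinates of the blocks. But each such block already has size $n$ (a zero-sum subsequence of first coordinates produced by EGZ in $\mathbb{Z}_n$ has length exactly $n$), so a union of $j$ blocks has cardinality $jn$, which equals $\mathrm{exp}(G)=n$ only for $j=1$; EGZ applied to block sums therefore produces zero-sum subsets of size $n^2$-type, not size $n$ --- this is the standard route to bounding $s(\mathbb{Z}_n^2)$ or $D(\mathbb{Z}_n^2)$, not $g(\mathbb{Z}_n^2)$. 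Moreover, with only $2n+1$ elements you cannot even form more than two disjoint blocks of size $n$, so there is no room to run EGZ (which would need $2m-1$ blocks) at the block level. This is exactly why the paper does something different: for each admissible distribution of $13$ points into rows (the $42$ partitions $p_6(\leq 6,13)$), it first exhibits an explicit linear relation $k_1a_1+\cdots+k_ta_t=0$ with $\sum k_i=6$ among the row labels (Proposition \ref{Main Proposition 1} for the $3{+}3{+}4$ pattern, and ad hoc relations in the ten remaining cases), and then forces $0$ into the corresponding sum of restricted sumsets of second coordinates via cardinality bounds (Lemma \ref{main}, Lemmas \ref{lp-sp} and \ref{lp}) and the form \Romannum{1}/\Romannum{2}/\Romannum{3} parity trichotomy. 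None of this machinery, nor any substitute for it, appears in your proposal, so even the $n=6$ case is not established by your argument; and for general $n$ the generic regime is precisely the part that remains open, so the conjecture as a whole is not proved by this plan.
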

 Keminitz studied the constant for the general group $G=\mathbb{Z}^d_n$ \cite{K82}, \cite{K83}.
 Several authors followed his work and 
studied the constant for the group $G=\mathbb{Z}^d_n,$ for all $n, d.$ ( \cite{B82}, \cite{BB82}, \cite{G97},
\cite{BE02}, \cite{EFLS02}, \cite{M95}, \cite{R00}, \cite{ST02} ).
In this paper, we prove that $g(\mathbb{Z}^2_6) = 13.$ In Section 3 of this paper, we show that ( Theorems \ref{Main Theorem 1}, \ref{Main Theorem 2}) if $X$ is a subset of $\mathbb{Z}^2_{2n}$ with cardinality $4n+1$ and $2n$ or $2n-1$ elements of $X$ have the same first coordinats, then $X$ contains a zero sum subset. In Section 4, as an application of our results, we prove Gao-Thangadurai's conjecture for the case $n=6.$ We also prove some results towards the general even $n$ cases.

\section{Preliminaries}
\begin{definition} Let $ A \subseteq \mathbb{Z}_{n},$  
For any natural number $m \leq n,$ the (unrestricted) $m$-fold sumset of $A,$
denoted by $mA,$ is the collection of all elements of $ \mathbb{Z}_{n}$ that can be written as the sum of $m$(not necessarily distinct) elements of $A,$ and the $m$-fold restricted sumset of A, denoted by $A{\psi}^{m-1}A,$ consists of the elements of  $\mathbb{Z}_{n}$ that can be written as the sum of $m$ distinct elements of $A.$
%
\end{definition}
\begin{definition} We say that a set $X= \left\lbrace a,b \right\rbrace $ $\subseteq$ $\mathbb{Z}_{2n}$  is of form $\Romannum{1},$ if $b=a+3.$ We say that a set $Y$ $\subseteq$  $\mathbb{Z}_{2n}$ is of form  $\Romannum{2},$ if all the elements of $Y$ are of same parity.  If a set is neither form $\Romannum{1}$ nor form $\Romannum{2}$, we say that it is of form $\Romannum{3}.$
\end{definition}
\begin{remar}
For $n=3,$ a set which is of form $\one$ can never be of form $\two$ and vice versa.
\end{remar}
We state the following theorem by Erdos-Ginzburg-Ziv Theorem  \cite{EGZ61}.
\begin{thm}\label{EGZ}
Each set of $2n-1$ integers contains some subset of $n$ elements the sum of which is a multiple of $n.$ 
\end{thm}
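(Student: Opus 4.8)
The plan is to prove the theorem first for $n$ a prime and then to bootstrap to arbitrary $n$ by a multiplicativity argument over the prime factorization. For the prime case $n = p$, I would invoke the Chevalley--Warning theorem. Given integers $a_1, \dots, a_{2p-1}$, consider over $\FF_p$ the two polynomials
$$f_1 = \sum_{i=1}^{2p-1} a_i x_i^{p-1}, \qquad f_2 = \sum_{i=1}^{2p-1} x_i^{p-1}$$
in the $2p-1$ variables $x_1, \dots, x_{2p-1}$. Each has degree $p-1$, so the sum of their degrees is $2p-2 < 2p-1$, the number of variables; Chevalley--Warning then guarantees that the number of common zeros in $\FF_p^{2p-1}$ is divisible by $p$. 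Since the origin is one such zero, there must exist a nontrivial common zero.

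For such a nontrivial solution, set $I = \{\, i : x_i \neq 0 \,\}$. By Fermat's little theorem $x_i^{p-1} = 1$ for $i \in I$, so the vanishing of $f_2$ forces $|I| \equiv 0 \pmod p$; as $0 < |I| \leq 2p - 1$, this yields $|I| = p$. The vanishing of $f_1$ then reads $\sum_{i \in I} a_i \equiv 0 \pmod p$, exhibiting precisely $p$ of the integers whose sum is divisible by $p$.

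For the multiplicative step, suppose the statement is known for $m$ and for $n$; I claim it holds for $mn$. Starting from $2mn - 1$ integers and repeatedly applying the case $m$ to any $2m-1$ of the remaining elements, one extracts disjoint blocks $B_1, \dots, B_{2n-1}$, each of size $m$ and each with sum divisible by $m$; the count closes exactly because after removing the first $2n-2$ blocks precisely $2m-1$ elements remain. Writing the block sums as $m s_j$ and applying the case $n$ to the integers $s_1, \dots, s_{2n-1}$ produces an index set $J$ of size $n$ with $\sum_{j \in J} s_j \equiv 0 \pmod n$; then $\bigcup_{j \in J} B_j$ consists of $mn$ of the original integers whose sum is $m \sum_{j \in J} s_j \equiv 0 \pmod{mn}$. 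Induction on the number of prime factors of $n$ completes the argument.

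The main obstacle is the prime case: the multiplicativity step is essentially bookkeeping, but the prime case genuinely requires an injection of algebra (Chevalley--Warning, or equivalently a Cauchy--Davenport / polynomial-method argument), since no purely greedy combinatorial argument is known to yield the sharp constant $2p - 1$.
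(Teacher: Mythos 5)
Your proof is correct. Note, however, that the paper does not prove this statement at all: it is the Erd\H{o}s--Ginzburg--Ziv theorem, quoted as a known result with a citation to the original 1961 paper, so there is no in-paper argument to compare against. What you have written is the standard modern proof --- Chevalley--Warning for the prime case (the degree count $2(p-1) < 2p-1$ and the Fermat trick forcing $|I|=p$ are both right), followed by the usual block-extraction multiplicativity step, whose arithmetic closes exactly as you say since $2mn-1-(2n-2)m = 2m-1$. One cosmetic remark: the theorem as stated in the paper speaks of a \emph{set} of $2n-1$ integers, but it is applied later to sequences with repetitions; your argument covers the multiset version, which is what is actually needed.
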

\begin{lem}
\label{gp}
Let $ A = \left\lbrace a_1, a_2 \right\rbrace $ and  $B = \left\lbrace b_1, b_2 \right\rbrace $  are subsets of $ \mathbb{Z}_{2n}.$ Then
$|A + B| \geq 2$ with equality iff $a_1 \equiv a_2 \pmod{n} $ and $b_1 \equiv b_2 \pmod{n}.$ 
\end{lem}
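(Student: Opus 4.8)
The plan is to prove the two assertions separately, working directly with the four sums $a_i+b_j$ that constitute $A+B=\{a_1+b_1,\,a_1+b_2,\,a_2+b_1,\,a_2+b_2\}$, and to keep in mind throughout that $A$ and $B$ are genuine two-element sets, so $a_1\neq a_2$ and $b_1\neq b_2$. For the lower bound $|A+B|\ge 2$, I would note that translation by the fixed element $b_1$ is injective on $\mathbb{Z}_{2n}$, so $a_1\neq a_2$ forces $a_1+b_1\neq a_2+b_1$; these two distinct elements already lie in $A+B$, giving $|A+B|\ge 2$ immediately.

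For the equality case, suppose $|A+B|=2$, so that all four sums collapse onto the two values $a_1+b_1$ and $a_2+b_1$ exhibited above. I would then test where the remaining two sums can land. The element $a_1+b_2$ cannot equal $a_1+b_1$ (that would force $b_1=b_2$), so it must equal $a_2+b_1$, yielding $a_1-a_2=b_1-b_2$. Symmetrically, $a_2+b_2$ cannot equal $a_2+b_1$, so it must equal $a_1+b_1$, yielding $a_1-a_2=b_2-b_1$. Subtracting these two relations gives $b_1-b_2=b_2-b_1$, i.e. $2(b_1-b_2)=0$ in $\mathbb{Z}_{2n}$. The one genuinely arithmetic step is now to observe that in $\mathbb{Z}_{2n}$ the equation $2x=0$ holds precisely when $x\equiv 0\pmod n$; since $b_1\neq b_2$ this forces $b_1-b_2=n$, hence $b_1\equiv b_2\pmod n$, and then $a_1-a_2=b_1-b_2=n$ gives $a_1\equiv a_2\pmod n$.

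For the converse, assume $a_1\equiv a_2\pmod n$ and $b_1\equiv b_2\pmod n$. Because $a_1\neq a_2$ and $b_1\neq b_2$, the only option in $\mathbb{Z}_{2n}$ is $a_1-a_2=n$ and $b_1-b_2=n$; substituting these into the four sums collapses them to the two-element set $\{a_1+b_1,\ a_1+b_1-n\}$, so $|A+B|=2$. The whole argument is elementary, and I do not expect a substantive obstacle; the only place where the composite modulus $2n$ (as opposed to a prime) matters is the implication $2x=0\Rightarrow x\equiv 0\pmod n$, so I would be careful to isolate and justify exactly that point rather than invoke any Cauchy--Davenport-type bound, which would not apply verbatim over $\mathbb{Z}_{2n}$.
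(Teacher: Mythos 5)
Your proof is correct and follows essentially the same route as the paper's: both establish $|A+B|\ge 2$ via $a_1+b_1\neq a_2+b_1$, and both characterize equality by forcing the pairing $a_1+b_2=a_2+b_1$ and $a_2+b_2=a_1+b_1$, which reduces to $2(b_1-b_2)=0$ and hence to congruence modulo $n$. You merely spell out the final arithmetic step ($2x=0$ in $\mathbb{Z}_{2n}$ iff $x\in\{0,n\}$) that the paper leaves implicit.
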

\begin{proof}
We know that $a_1+b_1,a_2+b_1$ are distinct as $a_1 \neq a_2,$ hence $|A + B| \geq 2.$ We also know that $a_1+b_1$ and $a_1+b_2$ are distinct as  $b_1 \neq b_2.$    We can see that $ |A+B|=2$ iff $ a_1+b_1 \equiv a_2+b_2 \pmod{2n}$ and $ a_2+b_1 \equiv a_1+b_2 \pmod {2n}$  iff  $a_1 \equiv a_2 \pmod{n}$ and $b_1 \equiv b_2 \pmod{n}.$
\end{proof}

\begin{lem}Let $S \subseteq \mathbb{Z}_{2n}^2$ with $|S| = 4n+1.$ Let the elements of $S$ are in row $a_1,$ $a_2, \cdots, a_{t}$ be the elements of $S$ for some $t \leq 2n.$  Let $A_i = \{ y / (a_i,y) \in S \}$ for all $1\leq i \leq t.$
Suppose $k_1+ k_2 +\cdots+k_{t}=2n,$ $k_1 a_1 + k_2 a_2+ \cdots + k_t a_t = 0$ for some $1\leq k_i \leq 2n$ and $0 \in A_1 {\psi}^{k_1-1}A_1+
A_2 {\psi}^{k_2-1} A_2+ \cdots + A_{t} {\psi}^{k_{t}-1 } A_{t},$  then
there exists $2n$ elements in the set $S$ whose sum is zero.
\end{lem}
\begin{proof}
We use the convention that if $k_i =0,$ then $A_i {\psi}^{{k_i}-1} A_i$ is an empty set. The equation $k_1 a_1 + k_2 a_2+ \cdots + k_{t}a_t = 0.$
Also, $0 \in A_1 {\psi}^{k_1-1}A_1+A_2 {\psi}^{k_2-1} A_2+ \cdots + A_{t} {\psi}^{k_{t} -1 } A_{t},$ which implies that there exists a choice of $k_i$ elements in $A_i,$ say $b_1^i, b_2^i \cdots, b_{k_i}^i,$ such that, $$ \sum_{1 \leq i \leq {t}}  \sum_{1 \leq j \leq {k_i}} b_{j}^i = 0.$$ 
Hence the $2n$ elements $(a_i, b_j^i)$ where $1\leq i \leq t, 1 \leq j \leq {k_i} $ adds upto $(0,0).$
\end{proof}
\begin{lem}\label{main}
 Let $A,B \subseteq \mathbb{Z}_n$. If $|A|+|B| > n,$ then $0 \in A+B.$ 
\end{lem}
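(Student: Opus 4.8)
The plan is to prove Lemma~\ref{main} by a standard pigeonhole argument on translates. Recall the statement: if $A, B \subseteq \mathbb{Z}_n$ satisfy $|A| + |B| > n$, then $0 \in A + B$. The claim $0 \in A + B$ is equivalent to saying there exist $a \in A$ and $b \in B$ with $a + b \equiv 0 \pmod{n}$, i.e. $a \equiv -b \pmod{n}$. So the natural reformulation is that $A$ and the set $-B := \{-b : b \in B\}$ must intersect.

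First I would form the set $-B = \{-b \pmod n : b \in B\}$ and observe that negation is a bijection on $\mathbb{Z}_n$, so $|-B| = |B|$. Both $A$ and $-B$ are subsets of the ambient set $\mathbb{Z}_n$, which has exactly $n$ elements. The hypothesis gives
\begin{equation*}
|A| + |-B| = |A| + |B| > n = |\mathbb{Z}_n|.
\end{equation*}
By inclusion-exclusion, $|A \cap (-B)| = |A| + |-B| - |A \cup (-B)| \geq |A| + |-B| - n > 0$, since $|A \cup (-B)| \leq n$. Therefore $A \cap (-B)$ is nonempty.

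The final step is to translate this nonempty intersection back into the desired conclusion. Pick any element $a \in A \cap (-B)$. Then $a \in A$, and since $a \in -B$ we have $a = -b$ for some $b \in B$, whence $a + b = 0$ in $\mathbb{Z}_n$. As $a \in A$ and $b \in B$, this exhibits $0 = a + b \in A + B$, completing the proof.

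There is no serious obstacle here: the only thing to be careful about is that negation is genuinely a bijection of $\mathbb{Z}_n$ (so that passing from $B$ to $-B$ preserves cardinality), and that the pigeonhole bound $|A| + |-B| > n$ forces an overlap inside a set of size $n$. Both are immediate, so the argument is short and self-contained.
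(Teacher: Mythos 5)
Your proof is correct and follows essentially the same route as the paper: both arguments negate one of the two sets (you negate $B$, the paper negates $A$) and apply inclusion--exclusion inside $\mathbb{Z}_n$ to force a nonempty intersection, yielding an element and its inverse whose sum is $0$.
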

\begin{proof}
 Let $|A| = i$ and $|B|=j$ and $i+j >n .$ Let $A = \left\lbrace a_1,a_2,...,a_i \right\rbrace $ and $B = \left\lbrace b_1,b_2,...,b_j \right\rbrace $. Define $ -A = \left\lbrace -a_1,-a_2,...,-a_i \right\rbrace $. It is clear that $|-A|=i$. We have,
 $$ |-A|+|B|-|(-A) \cap B| = |(-A) \cup B| \leq n $$
 Since $|A|+|B| > n $, $(-A) \cap B$ is non empty.  Hence there exists $a_l$ in $A$ such that $-a_l$ is in $B$. We have $ a_l+(-a_l) = 0 \in A+B .$
 \end{proof}
\section{MAIN THEOREMS}
Let the elements of $\mathbb{Z}^2_{N}$ are in $N$ rows and $N$ columns as 
$\{(0,0), (0,1),....(0,N-1), (1,0), (1,1),....\\(1,N-1),...(N-1,0), (N-1,1),....,(N-1,N-1) \}.$ We denote
$p_k ( \leq m, l ) $ the number of partitions of $l$ into at most $k$ parts, each less than or equal to $m.$
There are ${  N ^2  \choose {2N+1}  }$ subsets of $\mathbb{Z}_N^2$ with cardinality $2N+1.$
Given $X \subset \mathbb{Z}_N^2,$ with $|X| = 2N +1,$ elements of $X$ can be distributed in $N$ rows
and $N$ columns in $p_N ( \leq N, 2N+1 )$ ways.
\begin{lem}
\label{gen}
 Let $a_1,a_2,...,a_{2N+1}$ be a sequence in $\mathbb{Z}_N$ such that (i) $a_1=a_2= \cdots =a_N,$\\(ii) $ a_i \neq a_1$ $\forall$ $i \geq N+1$ and (iii) $a_{N+1},\cdots,a_{2N+1}$ have atleast two distinct elements. Then we have a subsequence of length $N,$ say $b_1,b_2,\cdots,b_N,$ such that $\sum_{i=1}^n b_i = 0,$  with at least one of the  $b_i $ is  $a_1$ and not all $b_i'$s are $a_1.$ 
 \end{lem}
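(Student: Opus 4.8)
The plan is to peel off the $N$ equal entries $a_1=\cdots=a_N$ and reduce the statement to a pure zero-sum problem for the tail $a_{N+1},\dots,a_{2N+1}$, which I then solve by a partial-sum pigeonhole argument together with a reordering trick. Write $c:=a_1$ and translate the tail by $-c$: set $a_i':=a_i-c$ for $N+1\le i\le 2N+1$. By hypothesis (ii) each $a_i'$ is nonzero in $\mathbb{Z}_N$, and by (iii) the values $a_i'$ are not all equal. The key observation is that a zero-sum subsequence of the $a_i'$ of length $r$ with $1\le r\le N-1$ immediately yields what we want: if $a_{i_1}'+\cdots+a_{i_r}'=0$, then the $N$ elements consisting of $a_{i_1},\dots,a_{i_r}$ together with $N-r$ of the copies of $c$ have sum $\sum_{j}(a_{i_j}'+c)+(N-r)c=Nc\equiv 0\pmod N$. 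Because $1\le r\le N-1$, this subsequence uses at least one copy of $c$ (as $N-r\ge 1$) and at least one tail element (as $r\ge 1$), so at least one $b_i$ equals $a_1$ while not all of them do. Thus everything reduces to producing a zero-sum subsequence of the nonzero, not-all-equal sequence $a_{N+1}',\dots,a_{2N+1}'$ whose length lies in $\{1,\dots,N-1\}$.

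For a fixed ordering $x_1,\dots,x_{N+1}$ of these $N+1$ values I would form the partial sums $s_0=0,\ s_1,\dots,s_{N+1}$ in $\mathbb{Z}_N$ and chase coincidences. If any two of $s_0,\dots,s_{N-1}$ agree, their difference is a zero-sum block of length between $1$ and $N-1$ and we are done; so assume $s_0,\dots,s_{N-1}$ are pairwise distinct, hence a complete set of residues modulo $N$. Then $s_N$ must coincide with one of them, and to avoid a zero-sum of length $N-i\le N-1$ it is forced that $s_N=s_0=0$, i.e. $x_1+\cdots+x_N=0$. Finally $s_{N+1}=x_{N+1}$ lies in the complete residue system, and the only coincidence that does not create a zero-sum of admissible length is $s_{N+1}=s_1$, i.e. $x_{N+1}=x_1$. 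In other words, the only way a given ordering can fail to exhibit a short zero-sum is the rigid ``escape'' configuration in which the first $N$ terms already sum to zero and the last term equals the first.

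The final and most delicate step is to rule out this escape configuration using hypothesis (iii), and here I would quantify over all orderings. If the sequence had no zero-sum subsequence of length in $\{1,\dots,N-1\}$ at all, then every ordering would have to be in the escape configuration above. Fixing one element with value $v$ in the first slot and placing an arbitrary other element, of value $w$, in the last slot (the remaining positions filled in any way), the escape condition forces $w=v$; letting the last element range over all the remaining entries shows that every value equals $v$, contradicting (iii). Hence a zero-sum subsequence of admissible length exists, and the reduction in the first paragraph completes the proof. I expect the reordering/quantification step to be the main obstacle, both because it is where the hypothesis ``at least two distinct elements'' is genuinely used and because one must check that the guaranteed length stays in $\{1,\dots,N-1\}$ and never collapses to the forbidden full length $N$.
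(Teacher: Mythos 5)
Your proof is correct, but it takes a genuinely different route from the paper's. The paper assumes WLOG that $a_{2N}\neq a_{2N+1}$ and applies the Erd\H{o}s--Ginzburg--Ziv theorem (Theorem \ref{EGZ}) twice, to the $(2N-1)$-term sequences $a_2,\dots,a_{2N}$ and $a_2,\dots,a_{2N-1},a_{2N+1}$: in each case the zero-sum $N$-subsequence produced cannot consist entirely of copies of $a_1$ (only $N-1$ copies are available there), so either it contains $a_1$ and we are done, or it is forced to be exactly the $N$ non-$a_1$ terms; if both applications fail, subtracting the two zero-sum identities yields $a_{2N}=a_{2N+1}$, a contradiction. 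You instead translate the tail by $-a_1$ and reduce the lemma to the assertion that $N+1$ nonzero, not-all-equal elements of $\mathbb{Z}_N$ admit a zero-sum subsequence of length in $\{1,\dots,N-1\}$, which you establish by the partial-sum pigeonhole argument quantified over all orderings; your padding with $N-r$ copies of $a_1$ then delivers exactly the required $N$-term subsequence containing, but not equal to, copies of $a_1$. I checked the delicate steps: the escape configuration is correctly identified ($s_N=s_0$ and $s_{N+1}=s_1$ are the only coincidences avoiding an admissible length), and placing two distinct-valued tail elements in the first and last slots does contradict hypothesis (iii). The paper's route buys brevity by leaning on EGZ, which is already stated; yours buys self-containedness and isolates a clean intermediate fact that is close in spirit to the paper's later Lemma \ref{non zero lemma} on zero-sum subsequences of ``acceptable length'' among nonzero elements.
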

 \begin{proof}
WLOG Let $a_{2N} \neq a_{2N+1}.$ Consider the sequence $a_2,\cdots , a_{2N}.$ By Theorem \ref{EGZ} we know there  exists a $N$-subsequence whose sum is zero. Note that not all elements in such $N$-subsequence are $a_1.$ If this subsequence contains $a_1,$ then we are done. Otherwise it must be $a_{N+1},\cdots , a_{2N}.$ This gives that $a_{N+1}+a_{N+2}+\cdots+a_{2N} \equiv 0 \pmod {N}.$ By  Theorem \ref{EGZ}, the sequence $a_2, a_3, \cdots , a_{2N-1}, a_{2N+1}$ contains a $N$-subsequence whose sum is zero. Note that not all elements in such $N$-subsequence are $a_1.$ If this contains $a_1,$ then we are done. Otherwise, it must be $a_{N+1}, \cdots ,a_{2N-1}, a_{2N+1}.$ Hence $a_{N+1}+a_{N+2}+ \cdots + a_{2N-1}+a_{2N+1} \equiv 0 \pmod{N}.$ Equating the two zero sums implies that $ a_{2N} = a_{2N+1},$ which is a contradiction. Hence there is an N-subsequence containing $a_1.$   
\end{proof}
\begin{thm} \label{Main Theorem 1}Suppose $(a_1,b_1),(a_2,b_2),...(a_{2N+1},b_{2N+1})$ is a subset in ${\mathbb{Z}_N^2}$ such that it contains all elements from a particular row that is, $a_1=a_2= \cdots = a_N.$ Then there exists an $N$ sum which adds upto $(0,0).$
\end{thm}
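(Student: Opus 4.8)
The plan is to reduce this two-dimensional problem to the one-dimensional zero-sum statement for the first coordinates supplied by Lemma \ref{gen}, and then to exploit the freedom of the full row to correct the second coordinate. First I would record what the hypothesis actually gives: since the subset contains \emph{all} elements of the row with first coordinate $a_1$, the second coordinates of those $N$ points exhaust the group, i.e. $\{b_1,\dots,b_N\}=\mathbb{Z}_N$. Moreover the remaining $N+1$ points $(a_{N+1},b_{N+1}),\dots,(a_{2N+1},b_{2N+1})$ cannot have first coordinate $a_1$, because that row is already completely used up, so hypothesis (ii) of Lemma \ref{gen} holds automatically; and they cannot all share a single first coordinate, since $N+1$ \emph{distinct} points of $\mathbb{Z}_N^2$ can never lie in one row of size $N$, so by pigeonhole their first coordinates take at least two values and hypothesis (iii) holds as well. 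Thus Lemma \ref{gen} applies verbatim to the sequence $a_1,\dots,a_{2N+1}$.

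Next I would invoke Lemma \ref{gen} to produce a zero-sum $N$-subsequence of first coordinates that uses some number $m$ of copies of $a_1$, with $1\le m\le N-1$, together with $N-m$ of the ``outside'' first coordinates. From this subsequence I keep only the outside indices, say the points $(a_{i_1},b_{i_1}),\dots,(a_{i_{N-m}},b_{i_{N-m}})$; their first coordinates together with $m$ copies of $a_1$ sum to $0$ in $\mathbb{Z}_N$. Writing $s=b_{i_1}+\cdots+b_{i_{N-m}}\in\mathbb{Z}_N$, it now suffices to choose $m$ of the row points whose second coordinates sum to $-s$: since every row point has first coordinate $a_1$, this choice leaves the first-coordinate sum untouched, and selecting $m$ distinct row points is exactly the same as selecting an $m$-element subset of $\mathbb{Z}_N=\{b_1,\dots,b_N\}$ with prescribed sum.

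Finally I would check that every residue is attainable as such a sum. The integer sums of $m$ distinct elements of $\{0,1,\dots,N-1\}$ sweep over all integers in the interval $[\binom{m}{2},\,mN-\binom{m+1}{2}]$, which contains exactly $m(N-m)+1$ consecutive integers. For $1\le m\le N-1$ one has $m(N-m)\ge N-1$, so this interval holds at least $N$ consecutive integers and therefore meets every residue class modulo $N$; in particular there is an $m$-subset of the row summing to $-s\pmod N$. Taking those $m$ row points together with the $N-m$ outside points yields $N$ distinct elements of the subset with coordinatewise sum $(0,0)$. The genuinely substantive step is this last sumset estimate, combined with the observation that conditions (ii) and (iii) of Lemma \ref{gen} are forced automatically by pigeonhole; once these are in hand the argument is a clean lift from one coordinate to two, and I expect no further obstacle.
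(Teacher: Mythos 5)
Your proposal is correct and follows the same overall strategy as the paper: verify the hypotheses of Lemma \ref{gen} for the sequence of first coordinates, extract a zero-sum $N$-subsequence that uses at least one but not all of the row elements, and then use the fact that the row's second coordinates exhaust $\mathbb{Z}_N$ to fix the second-coordinate sum. The difference lies in the final step, and it is in your favour. The paper designates a single free slot in the row, observes that its second coordinate ranges over all of $\mathbb{Z}_N$, and concludes that the total can be made zero; this implicitly treats the case where the zero-sum subsequence of first coordinates uses exactly one copy of $a_1$, and glosses over the case $m>1$, where the $m-1$ already-committed row points remove their second coordinates from the available choices. Your argument handles all $1\le m\le N-1$ uniformly: the sums of $m$ distinct elements of $\{0,1,\dots,N-1\}$ fill the interval $\left[\binom{m}{2},\,mN-\binom{m+1}{2}\right]$ of $m(N-m)+1$ consecutive integers, and since $m(N-m)\ge N-1$ this interval meets every residue class modulo $N$, so an $m$-subset of the row with any prescribed sum exists. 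Your explicit verification of hypotheses (ii) and (iii) of Lemma \ref{gen} (the row is saturated, and $N+1$ distinct points cannot lie in a single row) is also cleaner than the paper's one-line assertion. In short: same skeleton, but your version closes a genuine gap in the paper's second-coordinate argument.
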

\begin{proof}
Since $a_1 = a_2 = ..... = a_N,$ the elements $a_{N+1}, \cdots, a_{2N+1}$ are not all equal. Consider the sequence of first corodinates. Then by Lemma $\ref{gen},$ there exists an $N$-subsequence whose sum is zero and it contains $a_1.$   Now consider the second coordinates. Suppose $B$ denote the sum of second coordinates of the remaining $N-1$ sums (other than $a_1$). Let $A = \{ b_1, b_2,\cdots, b_N\}.$  Since $A = \mathbb{Z}_N,$  we have $ |A \psi B | \geq |A| = N$. Hence there exists a zero in $A \psi B$ and therefore we have a $N$ sum which is zero. 	 
\end{proof}
\begin{prop}\label{prop1}
Let $n \in \mathbb{N}$ and let $A \subset \mathbb{Z}_{2n}$ such that $|A| \geq n+2$ then $A \psi A = Z_{2n}.$
\end{prop}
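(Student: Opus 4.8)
The plan is to fix an arbitrary target $c \in \mathbb{Z}_{2n}$ and produce two \emph{distinct} elements $a, a' \in A$ with $a + a' = c$; since $c$ is arbitrary, this yields $A \psi A = \mathbb{Z}_{2n}$. The natural device is to compare $A$ with its reflection $c - A := \{\, c - a : a \in A \,\}$, because a representation $c = a + a'$ with $a, a' \in A$ is precisely an element of the intersection $A \cap (c - A)$.

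First I would record that $|c - A| = |A| \geq n+2$, so that $|A| + |c-A| \geq 2n + 4 > 2n = |\mathbb{Z}_{2n}|$. By the pigeonhole principle (equivalently, inclusion--exclusion, in the same spirit as Lemma \ref{main}) this forces
$$ |A \cap (c - A)| \geq |A| + |c-A| - 2n \geq 4. $$
Every $x \in A \cap (c-A)$ yields $a = c - x \in A$ with $x + a = c$, so at this stage $c$ already lies in the \emph{unrestricted} sumset $A + A$.

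The one thing still to be handled --- and the only real obstacle --- is the diagonal: the pigeonhole argument could a priori produce only representations with $x = a$, that is $2x = c$, which are forbidden in the restricted sumset. To rule this out I would observe that $A \cap (c-A)$ is stable under the involution $x \mapsto c - x$, whose fixed points are exactly the solutions of $2x \equiv c \pmod{2n}$. That congruence has at most two solutions in $\mathbb{Z}_{2n}$ --- none when $c$ is odd, and exactly two when $c$ is even. Since $|A \cap (c-A)| \geq 4 > 2$, at least one $x \in A \cap (c-A)$ is not fixed, and then $x$ and $a = c - x$ are distinct elements of $A$ summing to $c$, so $c \in A \psi A$.

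Finally I would remark that the hypothesis $|A| \geq n+2$ is precisely what makes the intersection strictly larger than the number of diagonal points: with $|A| = n+1$ the intersection could have size $2$ and consist solely of the two fixed points, so the bound cannot be relaxed by this method. As $c$ was arbitrary, this establishes $A \psi A = \mathbb{Z}_{2n}$.
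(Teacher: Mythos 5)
Your proof is correct, and it reaches the conclusion by a route that is organized differently from the paper's. The paper argues target by target as you do, but for each $x$ it writes down an explicit partition of $\mathbb{Z}_{2n}$ into pairs summing to $x$ (the sets $T_{x,1}, T_{x,2}$, with a parity split on $x$ because the even case produces two degenerate ``pairs'' $\{k\}$ and $\{k+n\}$) and then applies pigeonhole to those pairs. You instead run the inclusion--exclusion count on $A \cap (c-A)$ --- the same mechanism as the paper's Lemma \ref{main}, upgraded from ``nonempty intersection'' to the quantitative bound $|A \cap (c-A)| \geq 4$ --- and then dispose of the diagonal by noting that the involution $x \mapsto c-x$ has at most two fixed points, so at least one element of the intersection gives a representation by two \emph{distinct} elements of $A$. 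The two arguments are really dual descriptions of the same pigeonhole: the paper's pairs are exactly the two-element orbits of your involution, and its degenerate singletons are your fixed points. What your version buys is uniformity (no case split on the parity of $c$ until the trivial fixed-point count) and a transparent explanation of why $n+2$ is sharp, which you correctly note: for $|A|=n+1$ the intersection can consist of just the two fixed points, e.g.\ $A=\{k,k+1,\dots,k+n\}$ misses $2k$. The paper's version buys an explicit witness construction. Both are complete proofs.
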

\begin{proof} Consider an element $x$ in $Z_{2n}$.  
If $x=2k+1$ for some $k \in Z_{2n}$, define the sets $T_{x,1}' = \left\lbrace k+a : 1 \leq a \leq n \right\rbrace$ and $T_{x,2}' = \left\lbrace (k-a)+1 : 1 \leq a \leq n \right\rbrace $. Note that $ 2k+1 \in T_{x,1}' + T_{x,2}' ,$  $T_{x,1}' \cup T_{x,2}' = Z_{2n} $ and $ |T_{x,1}'| = |T_{x,2}'| = n $. There are $n$ pairs $\{ ( k+a, (k-a)+1 ) : 1 \leq a \leq n \}$ in $T_{x,1}' \cup T_{x,2}' = Z_{2n} .$  Then choosing $n+1$ elements from $A \subseteq  T_{x,1}' \cup T_{x,2}' = Z_{2n} $  guarantees the existence of $x$ in $A \psi A,$ since by pigeonhole principle, there exists atleast one pair  $( k+a, (k-a)+1 )$ such that both $k-a$ and $(k-a)+1$ belongs to $A$ for some $ 1 \leq a \leq n.$ The sum
is  $k+a + (k-a)+1 = 2k+1=x.$\\
If $x=2k$ for some $k \in Z_{2n}$, define the sets $T_{x,1} = \left\lbrace k+a : 0 \leq a \leq n \right\rbrace  $ and $T_{x,2} = \left\lbrace k-a : 0 \leq a \leq n \right\rbrace  .$ Note that $ 2k \in T_{x,1} + T_{x,2}, $  $T_{x,1} \cup T_{x,2} = \mathbb{Z}_{2n} $ and $ |T_{x,1}| = |T_{x,2}| = n+1$. Then choosing $n+2$ elements from $A$ guarantees the existence of $x$ in  $A \psi A,$ since by pigeonhole principle, there exists atleast one pair which sums up to $2k.$
\end{proof} 
\textbf{ Claim:} $A \psi^{k-1} A=Z_{2n}$ whenever $|A| \geq n+k$.
We proved it for $k=2.$ We provide a simple argument to obtain a weak lower bound for $|A|$ so that $A \psi^{k-1} A=Z_{2n}$ by induction. Assume that if $|A|\geq l,$ then $A \psi^{n-2} A = Z_{2n}.$ Let $|A|\geq l+1.$
Write $A={a}\cup (A-{a})$. Hence, $|A-{a}|\geq l$.
We know that $A \psi^{n-2} A+{a} \subset A \psi^{n-1} A $, but ${n-1}^{A-{a}}=Z_{2n}$, by induction. And by monotonicity of residue class addition, we get $A \psi^{n-1} A$ is $Z_{2n}$. This proves our claim.
\begin{remar}
We have shown that $A \psi^{k-1} A=Z_{2n}$ whenever $|A| \geq n+k$. Also, we can set up a bijection between $A \psi^{k-1} A$ and $A \psi^{|A|-k-1} A$ with the map $a \mapsto \left(\sum _{x\in A}  x\right)-a $. 
By these two facts, we can provide slightly stronger bounds. For example, 
Let $|A|=2n-1$. We get $A \psi^{k-1} A=Z_{2n}$ for $2\leq k \leq n-1$ and  $A \psi^{k-1} A=Z_{2n}$ for $2\leq k \leq 2n-3$. 
\end{remar}
\begin{definition}
 Let a subsequence of sequence of length $2n$ be of acceptable length if the length of the subsequence is not $2$ or $2n$.
\end{definition} 
\begin{lem}\label{non zero lemma}Among $2n+2$ non zero elements in $Z_{2n}$ not more than $2n$ elements are equal, there exists a zero sum subsequence of acceptable length.
\end{lem}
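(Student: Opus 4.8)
The plan is to argue by contradiction. Write $M$ for our sequence of $2n+2$ nonzero elements of $\mathbb{Z}_{2n}$, in which no value is repeated more than $2n$ times, and suppose $M$ has \emph{no} zero-sum subsequence of acceptable length, i.e.\ every nonempty zero-sum subsequence has length exactly $2$ or exactly $2n$. Since $|M|=2n+2$ exceeds the Davenport constant $D(\mathbb{Z}_{2n})=2n$, the sequence $M$ contains a nonempty zero-sum subsequence, and hence a minimal one. Every minimal zero-sum subsequence has length at most $2n$ and, as all entries are nonzero, at least $2$; a minimal zero-sum of length $\ell$ with $3\le \ell\le 2n-1$ would already be acceptable. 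So under our assumption \emph{every} minimal zero-sum subsequence of $M$ has length $2$ or $2n$, and the argument splits according to whether the extremal length $2n$ is attained.

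First suppose every minimal zero-sum subsequence has length $2$. Choose one, say $\{a,-a\}$, and delete these two entries. The remaining $2n$ elements again contain a nonempty zero-sum subsequence (since $D(\mathbb{Z}_{2n})=2n$); a subsequence $Z'$ minimal among these is automatically minimal in $M$, hence has length $2$. As $Z'$ is disjoint from $\{a,-a\}$, the union $\{a,-a\}\cup Z'$ is a zero-sum subsequence of length $4$. For $n\ge 3$ we have $4\notin\{2,2n\}$, so this is an acceptable zero-sum, a contradiction. This is precisely the step in which the hypothesis $n\ge 3$ is used, which is exactly the regime $2n=6$ relevant to $\mathbb{Z}_6^2$.

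Now suppose some minimal zero-sum subsequence $Z_0$ has length $2n$. Here I would invoke the classical structure theorem for cyclic groups: a minimal zero-sum sequence over $\mathbb{Z}_{2n}$ of maximal length $2n=D(\mathbb{Z}_{2n})$ must consist of $2n$ copies of a single generator $g$ (a unit). The multiplicity bound then forces $g$ to occur \emph{exactly} $2n$ times, so $M=\{g,\dots,g\}\sqcup\{a,b\}$ with $a,b\neq 0$ and $a,b\neq g$. For the entry $a$, let $k_a\in\{1,\dots,2n-1\}$ be the unique residue with $k_a\,g=-a$; since $a\neq g$ we have $k_a\neq 2n-1$, and if moreover $a\neq -g$ then $k_a\neq 1$, so $k_a\in\{2,\dots,2n-2\}$ and taking $k_a$ copies of $g$ together with $a$ gives a zero-sum of the acceptable length $k_a+1\in\{3,\dots,2n-1\}$; the same works for $b$. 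The sole remaining possibility is $a=b=-g$, i.e.\ $M=\{g,\dots,g,-g,-g\}$, in which case two copies of $g$ together with the two copies of $-g$ form a zero-sum of length $4$, again acceptable for $n\ge 3$. Every branch contradicts the assumption, completing the proof.

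I expect the main obstacle to be precisely this length-$2n$ case: the real content lies in justifying that a length-$2n$ minimal zero-sum is forced to be $2n$ equal units, and this is exactly where the hypothesis that no value repeats more than $2n$ times becomes indispensable, since the excluded configurations $g^{2n+1}(-g)$ and $g^{2n+2}$ are genuine counterexamples without it (their only zero-sums have length $2$ or $2n$). If one prefers to avoid citing the structure theorem, one can instead apply the Erd\H{o}s--Ginzburg--Ziv theorem (Theorem \ref{EGZ}) with modulus $n$ to $Z_0$: minimality forces the resulting $n$-subset, and its complement in $Z_0$, each to sum to $n$, and one then recombines these halves with $a,b$, at the cost of some extra case-checking. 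The boundary cases $n\le 2$ must be treated separately (the statement is essentially vacuous for $n=1$), since the length-$4$ constructions above require $2n>4$.
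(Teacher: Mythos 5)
Your proof is correct for $n\ge 3$ (the only regime in which the lemma is true and the only one the paper uses), but it follows a genuinely different route from the paper's. The paper splits on whether the sequence contains a pair of inverses: in the inverse-free case it selects a $2n$-term subsequence with nonzero sum and applies $D(\mathbb{Z}_{2n})=2n$ to get a zero-sum of length in $\{3,\dots,2n-1\}$, and in the remaining case it runs an ad hoc analysis on the number of distinct values (producing blocks like $a,a,b,b$ or deleting one copy each of two values). You instead argue by contradiction on the lengths of \emph{minimal} zero-sum subsequences: Davenport forces one to exist, the contradiction hypothesis forces its length into $\{2,2n\}$, two disjoint length-$2$ ones combine to an acceptable length-$4$ zero-sum, and a length-$2n$ one is handled by the classical inverse theorem that a minimal zero-sum sequence of length $m$ over $\mathbb{Z}_m$ is $g^m$ for a generator $g$ --- which is exactly where the multiplicity bound enters, pinning $g$ to exactly $2n$ occurrences. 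Your approach is cleaner and more structural; it also correctly isolates the hidden hypothesis $n\ge 3$ (the lemma genuinely fails in $\mathbb{Z}_4$, e.g.\ for $1,1,1,1,3,3$, and the paper's own $a,a,b,b$ step silently assumes $2n\ne 4$ as well). The only cost is the appeal to the structure theorem, which the paper never cites; since it admits a three-line proof (order $Z_0$ two ways with a transposed initial pair and compare the two complete sets of partial sums), this is a citation rather than a gap, though a fully self-contained write-up should either include that argument or execute the EGZ-based alternative you only sketch.
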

\begin{proof}
\textbf{Case 1: Suppose A doesn't contain a pair of inverses}
Let A be $a_1, \cdots ,a_{2n+2}$ such that $a_{2n} \neq a_{2n+2}$. Choose the subsequence $a_1,\cdots, a_{2n}$.
If $a_1+\cdots+ a_{2n}=0 $, then replace $a_{2n}$ by $a_{2n+1}$.
Now, $a_1+\cdots+ \hat{a_{2n-1}}+a_{2n+1}\neq 0 $.
But since $D(Z_{2n})=2n$, we must have a zero sum subsequence of acceptable length.
\textbf{Case 2:
Suppose $A$ contains atleast a pair of inverses.} $A$ must contain atleast three different elements.
If not, then if $a, b$ are the inverses, we have the acceptable length subsequence $a,a,b,b$ which adds upto zero.
WLOG, let the inverses be in $a_1$ to $a_{2n+1}$ and the third element be $a_{2n+2}$. We can assume that $A$ doesn't have two copies of $a$ and $b$ simultaneously. Otherwise, $a,a,b,b$ exists.
WLOG, we say that $b$ appears only once in $A.$
\textbf{Case 2.a: $A$ contains only 3 different elements}
Now $A$ looks like $b,a,\cdots,a,c,\cdots,c$.
Pick the $2n$ element sequence to be $A$ without one copy of $b$ and $a.$ If this sequence sums upto zero, then pick the $2n$ sequence obtained by removing $b$ and $c$ from $A.$ This can't add upto zero and therefore, has a subsequence of acceptable length that sums to zero.
Note: The argument might not work when $A$ is of the $b,c,a,\cdots,a$ where $a$ appears $2n$ times, but clearly the $2n$ sequence $c,a,\cdots,a$ doesn't add upto zero.

\textbf{Case 2.b: $A$ has more than 4 elements}
Say, $a,b,c,d$ and others.
$A$ looks like \\
$b,a,\cdots,a,c,\cdots ,c, d,\cdots,d,others$.
Pick the $2n$ element sequence to be $A$ without one copy of $b$ and $a.$ If this sequence sums upto zero, then pick the $2n$ sequence obtained by removing $b$ and $c$ from $A.$ This sequence can't sum upto zero. If it has a two element subsequence that sums upto zero, say $e,f,$ then clearly $a+b+e+f=0.$ This completes our proof.
\end{proof}
\begin{thm}\label{Main Theorem 2}
 Let A be a $4n+1$ length sequence in $Z_{2n}\times Z_{2n}$ such that $2n-1$ elements are from a particular row, then there exists a $2n$ subsequence of A that sums up to zero.
 \end{thm}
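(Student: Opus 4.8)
The plan is to reduce the two–dimensional problem to a one–dimensional zero–sum problem for the first coordinates, and then to complete in the second coordinate using the sumset computation for $A\psi^{k-1}A$ established above.

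First I would normalize. If $r$ is the common first coordinate of the $2n-1$ distinguished elements, the substitution $(a,b)\mapsto(a-r,b)$ preserves the property of being a zero-sum subset of size exactly $2n$, since the first coordinates of any such subset are shifted by $-2nr\equiv 0\pmod{2n}$. Hence I may assume $r=0$. Write $A_0\subseteq\mathbb{Z}_{2n}$ for the set of second coordinates occurring in row $0$, so $|A_0|=2n-1$ and $A_0$ omits exactly one value $\mu$. The remaining $2n+2$ elements have nonzero first coordinates $c_1,\dots,c_{2n+2}$ and second coordinates $d_1,\dots,d_{2n+2}$; because $A$ is a subset, no first-coordinate value is repeated more than $2n$ times, so the $c_j$ form a collection of $2n+2$ nonzero elements of $\mathbb{Z}_{2n}$ in which no value occurs more than $2n$ times.

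Next I would apply Lemma \ref{non zero lemma} to the $c_j$ to obtain a zero-sum subsequence of acceptable length $m$, i.e. $m\neq 2$ and $m\neq 2n$; since the elements are nonzero $m\neq 1$, and since the proof of that lemma locates the zero-sum inside a block of $2n$ terms one also gets $m\le 2n-1$. Selecting the corresponding $m$ elements (whose first coordinates sum to $0$) together with $k:=2n-m$ elements of row $0$ (whose first coordinates are $0$) produces $2n$ elements whose first coordinates already sum to $0$. By the earlier reduction lemma — any weights $k_i$ with $\sum_i k_i=2n$, $\sum_i k_i a_i=0$ and $0\in\sum_i A_i\psi^{k_i-1}A_i$ yield a zero-sum $2n$-subset — it remains only to realize the second-coordinate sum: writing $D$ for the sum of the chosen $d_j$, I must find a $k$-element subset of $A_0$ summing to $-D$, i.e. show $-D\in A_0\psi^{k-1}A_0$. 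Here the remark following Proposition \ref{prop1} is decisive, for with $|A_0|=2n-1$ one has $A_0\psi^{k-1}A_0=\mathbb{Z}_{2n}$ for every $k$ with $2\le k\le 2n-3$, covering exactly the lengths $3\le m\le 2n-2$.

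The argument therefore closes for every acceptable $m$ except the single boundary value $m=2n-1$ (equivalently $k=1$), and I expect this to be the main obstacle. When $m=2n-1$ the completion requires the one chosen row-$0$ element to have second coordinate $-D$, which fails precisely when $-D=\mu$, the omitted value of $A_0$. To escape this I would use that the $2n+2$ nonzero elements occupy only the $2n-1$ nonzero rows, so by pigeonhole some nonzero row carries at least two of them; trading one element of the chosen length-$(2n-1)$ subsequence for an unused element sharing its first coordinate keeps the first-coordinate sum at $0$ while strictly changing $D$ (the two elements have distinct second coordinates), thereby moving off the forbidden value $-D=\mu$. Guaranteeing that such an exchange is always available — or, alternatively, sharpening Lemma \ref{non zero lemma} so that it furnishes a zero-sum of length at most $2n-2$ outright — is the delicate point, and it is where careful bookkeeping of how the $2n+2$ elements distribute among the nonzero rows will be required.
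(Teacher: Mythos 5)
Your proposal follows the same route as the paper's own proof: normalize the distinguished row to $0$, apply Lemma \ref{non zero lemma} to the $2n+2$ nonzero first coordinates to extract a zero-sum block of acceptable length $m$, pad it with $2n-m$ elements of row $0$, and complete the second coordinate using the fact that $A_0\psi^{2n-m-1}A_0=\mathbb{Z}_{2n}$ when $|A_0|=2n-1$ and $2\le 2n-m\le 2n-3$. For $3\le m\le 2n-2$ this is exactly the published argument and it is sound.

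The boundary case $m=2n-1$ that you flag is a genuine gap, but note that it is a gap in the paper's proof as much as in yours: the paper simply asserts $B\psi^{2n-k-1}B=\mathbb{Z}_{2n}$ for every acceptable length $k$, which for $k=2n-1$ would read $B\psi^{0}B=B=\mathbb{Z}_{2n}$ and is false, since $B$ omits exactly one residue. Your diagnosis of the obstruction ($-D$ landing on the omitted value $\mu$) is correct, and the exchange you propose is the natural repair, but as you suspect its availability is not automatic. The $2n+2$ off-row elements could, for instance, occupy one nonzero row three times, another twice, and the remaining $2n-3$ rows once each; if the chosen length-$(2n-1)$ zero-sum block consists of precisely the size-two row together with all the singletons, then every nonzero row is either entirely used or entirely unused and no swap preserving the first-coordinate sum exists. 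So to close the argument one must either produce a second, genuinely different candidate sum $D$ (say by rerunning Lemma \ref{non zero lemma} on a different $2n$-term block, as in its own proof), or sharpen Lemma \ref{non zero lemma} to exclude length $2n-1$ outright. Neither step appears in the paper, so what you have identified is a point where the published proof itself needs repair rather than a defect peculiar to your approach.
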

\begin{proof}
 Let $A$ be the sequence $x_1, \cdots, x_{4n+1}$ where $x_i=(a_i,b_i)$. Let $A'$ be the sequence of first coordinates, that is $a_i$. Let us assume that the first $2n-1$ elements are from the particular row $a,$ $a_i=a$ when $1 \leq i \leq 2n-1$. We will now construct a $2n$ element subsequence of $A'$ that adds up to zero. Construct $A''$ to be the subsequence $a_i-a$'s where $2n \leq i \leq 4n+1$. This is $2n+2$ non zero element subsequence which satisfies the criterion in the lemma \ref{non zero lemma}. Therefore, this contains a zero sum subsequence of acceptable length. Let this zero sum subsequence be ${{{a^{'} }_i}_j}$ where $1\leq j \leq k$. But  ${{{a^{'} }_i}_j}={a_i}_j-a$. Since this sequence is a zero sum sequence $\sum_{j=1}^k {a_i}_j=ka$.
Therefore, a new $2n$-length sequence $y_i=a$ for $1\leq i \leq 2n-k$, $y_{2n-k+j}={a_i}_j$ where $1\leq j \leq k$. By way of construction, $\sum_i y_i=0 $.\\
Look at set of all possible second coordinates sums of sub-sequences of $A$ whose first coordinates are $y_i$ be $B$. 
This is of the form $ B \psi^{2n-k-1} B + D$, for some set $D.$ But from our previous lemma, we know that 
$B \psi^{2n-k-1} B = Z_{2n}$ and therefore, it contains a zero. This completes our proof.
\end{proof}

\section{Application}
We apply our main theorems to prove the following theorem, which settles Gao-Thangadurai's conjecture for
$n=6.$ There are 23,107,896,00 subsets of $\mathbb{Z}_{6}^2$ with cardinality 13. Given 
$X \subseteq \mathbb{Z}_{6}^2,$ with $|X|=13,$ elements of $X$ can be distributed in six rows
in $42 = p_6(\leq 6, 13)$ ways, given by

{\bf 1.} 6+6+1 (six elements in one row, six elements in another row and one element in another row),\\
{\bf 2.} 6+5+2, {\bf 3.} 6+5+1+1, {\bf 4.} 6+4+3, {\bf 5.} 6+4+2+1, {\bf 6.} 6+4+1+1+1, {\bf 7.} 6+3+3+1, {\bf 8.}  6+3+2+2, {\bf 9.} 6+3+2+1+1, {\bf 10.} 6+3+1+1+1+1, {\bf 11.} 6+2+2+2+1, {\bf 12.}  6+2+2+1+1+1, {\bf 13.}  5+5+3, {\bf 14.} 5+5+2+1, {\bf 15.} 5+5+1+1+1, {\bf 16.} 5+4+4, {\bf 17.}  5+4+3+1, {\bf 18.} 5+4+2+2, {\bf 19.} 5+4+2+1+1, {\bf 20.} 5+4+1+1+1+1,
{\bf 21.} 5+3+3+2, {\bf 22.} 5+3+3+1+1, {\bf 23.} 5+3+2+2+1, {\bf 24.} 5+3+2+1+1+1, {\bf 25.} 5+2+2+2+2, {\bf 26.} 5+2+2+2+2+1+1, {\bf 27.} 4+4+4+1, {\bf 28.} 4+4+3+2, {\bf 29.} 4+4+3+1+1, {\bf 30.} 4+4+2+2+1, {\bf 31.} 4+4+2+1+1+1, {\bf 32.} 4+3+3+3, {\bf 33.} 4+3+3+2+1, {\bf 34.} 4+3+3+1+1+1, {\bf 35.} 4+3+2+2+2, {\bf 36.} 4+3+2+2+1+1, {\bf 37.} 4+2+2+2+2+1, {\bf 38.} 3+3+3+3+1, {\bf 39.} 3+3+3+2+2, {\bf 40.} 3+3+3+2+1+1, {\bf 41.} 3+3+2+2+2+1, {\bf 42.} 3+2+2+2+2+2.
\begin{thm}\label{Main Theorem}
Given any $X \subseteq \mathbb{Z}_{6}^2,$ with $|X| = 13,$ there exists a subset of $S$ of $X$ with
$|S| = 6,$ and $\sum_{s \in S} s = (0,0).$
\end{thm}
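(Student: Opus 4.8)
The plan is to reduce the main theorem to the two main theorems already established (Theorems \ref{Main Theorem 1} and \ref{Main Theorem 2}) by organizing the argument around the row-distribution types listed above, where $N=6$, $2n=6$, so $n=3$. Since $|X|=13=2N+1$, the elements of $X\subseteq\mathbb{Z}_6^2$ are distributed among the six rows according to one of the $42$ partition types. The key observation is that Theorem \ref{Main Theorem 1} disposes of every distribution in which some row contains all $6$ elements (a full row), since then $a_1=\cdots=a_6$ and we get a zero-sum $6$-subset directly; this kills types \textbf{1} through \textbf{12}. Similarly, Theorem \ref{Main Theorem 2} handles every distribution containing a row with exactly $5$ elements, because a $5$-element row supplies $2n-1=5$ elements from a particular row, matching the hypothesis of that theorem; this kills types \textbf{13} through \textbf{26}. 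So the first step is to invoke these two theorems to eliminate all $26$ distributions whose largest part is $5$ or $6$.

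Second, I would treat the remaining sixteen types, \textbf{27} through \textbf{42}, in which every row has at most $4$ elements. Here I want to reuse the machinery of Lemma \ref{gen} and the sumset results (Proposition \ref{prop1} and the Claim that $A\psi^{k-1}A=\mathbb{Z}_{2n}$ when $|A|\geq n+k$) via the bookkeeping Lemma that lets one assemble a zero-sum $2n$-subset once one finds multiplicities $k_1,\dots,k_t$ with $\sum k_i=2n$ and $\sum k_i a_i\equiv 0\pmod 6$ together with $0\in\sum_i A_i\psi^{k_i-1}A_i$. The strategy for each type is: use the constraint $\sum k_i=6$ on the first coordinates (with $k_i$ bounded by the row-size) to solve the weighted zero-sum equation $\sum k_i a_i\equiv 0\pmod 6$ in the rows that are occupied, and then verify that the restricted sumsets of the corresponding second-coordinate sets $A_i$ cover $0$. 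For rows with $|A_i|$ large relative to $k_i$ the sumset is all of $\mathbb{Z}_6$, so the second-coordinate condition becomes automatic; the delicate cases are those with several rows each contributing few elements, where one must argue more carefully using Lemma \ref{main} (if $|A|+|B|>n$ then $0\in A+B$) to force the sum.

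The main obstacle, I expect, is the case analysis for the low-multiplicity distributions near the end of the list, especially the most spread-out types such as \textbf{42} ($3+2+2+2+2+2$) and \textbf{41} ($3+3+2+2+2+1$), where every occupied row contributes only two or three elements and no single sumset is forced to be all of $\mathbb{Z}_6$. In these cases one cannot lean on a single full or nearly-full row, so the existence of a solution to the weighted equation $\sum k_i a_i\equiv 0\pmod 6$ that simultaneously satisfies the restricted-sumset condition requires combining several partial sumsets and checking that their Minkowski sum contains $0$. I would handle this by first establishing, from the pigeonhole count on the six first-coordinate values versus six rows, that a suitable choice of row-multiplicities exists, and then invoking the additive lemmas to guarantee the second-coordinate sumset hits $0$; if a purely structural argument does not close a residual handful of configurations, a direct finite check on those specific patterns completes the proof.
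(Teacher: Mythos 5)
Your first step is exactly the paper's: Theorem \ref{Main Theorem 1} eliminates every partition type containing a part equal to $6$ (types \textbf{1}--\textbf{12}) and Theorem \ref{Main Theorem 2} eliminates every type containing a part equal to $5=2n-1$ (types \textbf{13}--\textbf{26}), and your identification of the bookkeeping lemma (choose multiplicities $k_i$ with $\sum k_i=6$, $\sum k_i a_i\equiv 0\pmod 6$, and $0\in\sum_i A_i\psi^{k_i-1}A_i$) as the engine for the remaining types is also the paper's framework.

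The gap is everything after that. For types \textbf{27}--\textbf{42} you assert that a suitable weighted zero-sum relation on the first coordinates ``exists by pigeonhole'' and that the second-coordinate condition is then ``automatic'' or can be forced by Lemma \ref{main}; neither claim is true without substantial further argument, and supplying that argument is the bulk of the paper's proof. On the first-coordinate side, existence of a usable relation is already nontrivial: in type \textbf{42} ($3+2+2+2+2+2$) all six rows are occupied, a transversal sums to $0+1+\cdots+5\equiv 3\pmod 6$, and the paper must locate a specific relation such as $2a+b+c+d+e=0$ coming from a pair $f=a+3$; for the $3+3+4$ sub-configuration the paper needs a dedicated Proposition \ref{Main Proposition 1}, proved via two applications of Erd\H{o}s--Ginzburg--Ziv plus a parity argument, to show one of seven candidate relations vanishes. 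On the second-coordinate side, Lemma \ref{main} requires the combined cardinality to exceed $6$, and in the spread-out types the relevant restricted sumsets can be as small as $2$ or $3$ (e.g.\ when rows are of ``form \Romannum{1}'' or ``form \Romannum{2}'' in the paper's terminology), so the bound genuinely fails and the paper must instead track parities of the sets $A\psi A$, invoke Lemma \ref{lp}, Lemma \ref{fourzero} and Corollary \ref{maincor}, and in types \textbf{38} and \textbf{39} even switch to the column distribution to reduce to previously settled types. Your closing appeal to ``a direct finite check on those specific patterns'' concedes that these configurations are unresolved; as written, the proposal proves the theorem only for the $26$ types with a part of size $5$ or $6$ and leaves the remaining $16$ open.
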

\textbf{  \ \ \ \ \  \  \ \ \ \ \ \ \ \ \  3+3+4 \ (the partitions of $10$ into three parts)  }
\begin{prop}\label{Main Proposition 1}
Let $a,b,c$ denote the corresponding rows in which 3,3,4 elements are distributed respectively.
There is a zero in the sequence $2(a+b+c), 2a+b+3c, 2b+c+3a, a+2b+3c, 2a+c+3b,a+2c+3b,b+2c+3a.$ 
\end{prop}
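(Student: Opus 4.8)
The plan is to reduce the claim to a short parity-versus-residue analysis via the Chinese Remainder Theorem. First I would record that the seven listed quantities are exactly the weighted row-sums $k_a a + k_b b + k_c c$ as $(k_a,k_b,k_c)$ ranges over the composition $(2,2,2)$ together with the six permutations of $(1,2,3)$; in particular each part lies in $\{1,2,3\}$, so every one of them is a legitimate way to select $6$ elements from rows of sizes $3,3,4$. The crucial structural observation is that this multiset of seven elements of $\mathbb{Z}_6$ is invariant under every permutation of $\{a,b,c\}$, so in the argument I may freely relabel $a,b,c$. Since $6 = 2\cdot 3$, a quantity vanishes in $\mathbb{Z}_6$ precisely when it vanishes modulo $2$ and modulo $3$; I would note once and for all that $2(a+b+c)$ is always even, and that modulo $3$ the relevant permutation terms collapse to differences, e.g.\ $a+3b+2c \equiv a-c$ and $3a+b+2c \equiv b-c$, while $2(a+b+c)\equiv -(a+b+c)$.

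Next I would split on the parities of $a,b,c$, using throughout that they are distinct elements of $\mathbb{Z}_6$ (distinct rows). If $a,b,c$ all share a parity, then they are exactly $\{0,2,4\}$ or $\{1,3,5\}$, which is a complete residue system modulo $3$; hence $a+b+c\equiv 0 \pmod 3$, and since $2(a+b+c)$ is even it already vanishes in $\mathbb{Z}_6$. Otherwise two of them share a parity and the third is opposite, and by the permutation symmetry noted above I may assume $a,b$ are the matching pair and $c$ the odd one out. A direct parity count then shows that among the seven terms exactly $2(a+b+c)$, $a+3b+2c$, and $3a+b+2c$ are even; the remaining four are odd and hence irrelevant.

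It therefore remains to force one of these three even terms to vanish modulo $3$. Here I would use that $a$ and $b$, being distinct and of equal parity, automatically have distinct residues modulo $3$ (reduction is a bijection on each of $\{0,2,4\}$ and $\{1,3,5\}$). Now I examine the residue of $c$: if $c\equiv a \pmod 3$ then $a+3b+2c\equiv a-c\equiv 0$; if $c\equiv b\pmod 3$ then $3a+b+2c\equiv b-c\equiv 0$; and if $c$ avoids both residues then $\{a,b,c\}$ realizes all three classes modulo $3$, so $a+b+c\equiv 0$ and $2(a+b+c)$ vanishes. In each case the chosen term is simultaneously even and divisible by $3$, hence zero in $\mathbb{Z}_6$. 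The main obstacle I anticipate is precisely this last alignment: finding a term that is $0$ modulo $3$ is trivial on its own, but forcing it to be even at the same time is what makes the statement nontrivial, and it is exactly the distinctness of the rows, specifically the fact that equal-parity rows have distinct residues modulo $3$, that closes the gap. Indeed, dropping distinctness breaks the claim, as $a=b=1,\ c=0$ shows, where all seven quantities equal $3$ or $4$.
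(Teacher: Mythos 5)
Your proof is correct, and it takes a genuinely different route from the paper's. The paper argues by contradiction: assuming the first five listed terms are all nonzero, it applies the Erd\H{o}s--Ginzburg--Ziv theorem to the sequences $a,a,b,b,c$ and $a,b,b,c,c$ to produce three-element subsums divisible by $3$, and then runs a parity argument on the auxiliary list $a+3c,\ c+3a,\ c+3b,\ b+3c$ to conclude that $a+2c+3b$ or $b+2c+3a$ must vanish. You instead decompose directly via the Chinese Remainder Theorem: the observation that the seven weighted sums form a multiset symmetric in $a,b,c$, the parity count isolating the three even terms when exactly two rows share a parity, and the key fact that distinct equal-parity elements of $\mathbb{Z}_6$ reduce to distinct residues modulo $3$, together let you name explicitly which term vanishes in every configuration. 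Your version avoids EGZ entirely, is constructive rather than by contradiction, and makes visible that (up to relabelling) only $2(a+b+c)$ and the two terms with $c$-coefficient $2$ are ever needed; the paper's version is terser on the page but leaves the matching between the EGZ output and the seven listed terms, and the parity bookkeeping, largely implicit. One cosmetic slip in your closing aside: for $a=b=1$, $c=0$ the seven values are $\{3,4,5\}$, not just $\{3,4\}$ (e.g.\ $2b+c+3a=5$); the point that distinctness is essential and that no term vanishes there is of course unaffected.
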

\begin{proof}
Suppose none of the elements $2(a+b+c), 2a+b+3c, 2b+c+3a, a+2b+3c, 2a+c+3b$ are divisible by 6.
Then we show that $a+2c+3b$ or $b+2c+3a$ is divisible by 6. Consider the sequence $aabbc$.  
By Theorem \ref{EGZ}, we have atleast one element in the sequence $T_1 : a+b+c,2a+b,2b+c,a+2b,2a+c$ is divisible by $3.$ Consider the sequences $abbcc$. By Theorem \ref{EGZ}, atleast one element in $T_2 : a+b+c, a+2b,a+2c,b+2c,2b+c$ is divisible by $3.$   
\\
Let us denote the sequence $a+3c,c+3a,c+3b,b+3c$  by $S$. 
Suppose there is atleast one even in $S,$ then we have an element in $T_1,$ which is divisible by $6.$ That is a contradiction to our assumption. Hence all elements in $S$ are odd. Therefore $a+3c + c+3b$ and $b+3c+c+3a$ are even. Hence either $a+3b+2c$ or $b+3a+2c$ in $T_2$ is divisible by $6$.\\ 
We consider four elements in the sequence $2(a+b+c), 2a+b+3c, 2b+c+3a, a+2b+3c, 2a+c+3b,a+2c+3b,b+2c+3a$ and prove the proposition. Remaining three cases can be proved similarly by interchanging $a$ and $b.$
\\
Suppose $3a+2b+c = 0 ,$  this implies that if we choose 3 elements from row $a$, 2 from row $b$ and one from row $c$, the first coordinate adds upto $ 0$ modulo $6.$\\
We have $|A\psi A\psi A|=1,|B\psi B| =3,$  and $|C|=4.$  Since $|A\psi A\psi A+B\psi B|+|C|=7>6,$ \\ $ 0 \in A\psi A\psi A+B\psi B+C,$ by Lemma $\ref{main}.$
\\
Suppose $a+3b+2c = 0,$ consider the set $A+B\psi B\psi B+C\psi C.$ We have $ | B \psi B \psi B| =1  $ and therefore,
$ |A+B \psi B \psi B| = 3. $ Since 
$ |C \psi C|\geq 4$ (by Lemma $\ref{lp}$ i). We have $|A + B \psi B \psi B|+|C\psi C| > 6.$ Hence $0 \in A + B \psi B \psi B+C\psi C,$ by Lemma $\ref{main}.$
\\
 Suppose $a+2b+3c = 0. $ 
 Then  
$ |B \psi B| = 3  $ and therefore, $ |A  + B \psi B| \geq 3 $. Since $ | C \psi C \psi C| = 4  ,$  $ 0 \in A  + B \psi B+ C \psi C \psi C,$ by Lemma $\ref{main}.$
\\
Suppose $2(a+b+c)$ is zero. Then $ |B \psi B| \geq 3  $ and $ |A  \psi A| \geq 3. $ This implies $ |A \psi A  + B \psi B| \geq 3. $ Since $ |C \psi C| \geq 4 ,$  $ 0 \in A \psi A + B \psi B+C \psi C,$ by Lemma $\ref{main}.$
\end{proof}
 Out of these $42 = p_6(\leq 6, 13)$ cases, Theorem \ref{Main Theorem 1} and \ref{Main Theorem 2}, Theorem \ref{Main Theorem} is solved for the cases where 6 and 5 appears as an entry in the partition of $13.$ Cases in which 3+3+4 appears are also solved.  We show that the remaining $10$ cases of partitions of $13$ follows from Lemma 2.7 by computing the cardinalities of certain subsets. Next we provide the proofs of these $10$ cases, case by case. We need following three lemmas.
\begin{lem} 
\label{lp-sp}Let $A\subseteq  \mathbb{Z}_{6}.$ For any natural number $y < |A|,$
$$ | A \psi^y A | \geq |A|  \  \text{ if $1 \leq y < |A|-1$,} \
 | A \psi^y A | = 1 \  \text{if $y = |A|-1.$ }$$ 
\end{lem}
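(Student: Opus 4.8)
The plan is to treat the two regimes separately and, within the first regime, to reduce every configuration except one to facts already established in the excerpt, leaving a single small self-dual case to be checked by hand.

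The case $y=|A|-1$ is immediate: a sum of $|A|-1+1=|A|$ distinct elements of $A$ must use every element of $A$, so $A\psi^{|A|-1}A=\bigl\{\sum_{x\in A}x\bigr\}$ is a singleton and $|A\psi^{|A|-1}A|=1$. For the first regime $1\le y\le |A|-2$, I would exploit two invariances. The first is the duality bijection noted in the Remark following Proposition \ref{prop1}, namely $s\mapsto\bigl(\sum_{x\in A}x\bigr)-s$, which identifies a sum of $y+1$ distinct elements with the sum of the complementary $|A|-y-1$ distinct elements; this yields $|A\psi^{y}A|=|A\psi^{|A|-y-2}A|$. In particular, taking $y=|A|-2$ gives $|A\psi^{|A|-2}A|=|A\psi^{0}A|=|A|$, which already meets the bound. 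The second invariance is translation: replacing $A$ by $A+t$ shifts each summand by $t$ and hence shifts every element of $A\psi^{y}A$ by $(y+1)t$, so $|(A+t)\psi^{y}(A+t)|=|A\psi^{y}A|$; I will use this to cut down the number of sets to inspect.

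Next I would invoke Proposition \ref{prop1} together with the Claim proved just after it, which (for $n=3$) states that $A\psi^{k-1}A=\mathbb{Z}_{6}$ as soon as $|A|\ge 3+k$, i.e. $A\psi^{y}A=\mathbb{Z}_{6}$ whenever $|A|\ge y+4$. Since $|A|\le 6$, whenever this applies we get $|A\psi^{y}A|=6\ge|A|$. Combining this growth result with the duality identity disposes of all remaining pairs $(|A|,y)$ with $|A|\ge 5$: if $y\le|A|-4$ the set is the whole group; if $y=|A|-3$ duality sends it to $A\psi^{1}A$, which is full because $1\le|A|-4$; and $y=|A|-2$ was handled above. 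The small values $|A|=2,3$ are covered by the $y=|A|-1$ and (via duality) $y=|A|-2$ analyses already given.

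This leaves exactly one configuration not reached by the above machinery: $|A|=4$ with $y=1$, i.e. the set of pairwise sums $A\psi^{1}A$ of a four-element subset of $\mathbb{Z}_6$, for which I must show $|A\psi^{1}A|\ge 4$. Here I would use translation-invariance to reduce to the three orbit representatives, which I may take to be the complements of $\{0,1\}$, $\{0,2\}$, $\{0,3\}$, namely $A=\{2,3,4,5\}$, $A=\{1,3,4,5\}$, and $A=\{1,2,4,5\}$; a direct computation gives $|A\psi^{1}A|=5,6,4$ respectively, all at least $4$. The main obstacle is precisely this self-dual case: because $y=1$ equals $|A|-y-2$ when $|A|=4$, duality gives no reduction, and since $|A|=4<5$ the Proposition/Claim does not apply, so the estimate is genuinely tight — the representative $\{1,2,4,5\}=\{1,2\}+\{0,3\}$ attains exactly four sums — and verifying that no four-element subset produces fewer than four distinct pairwise sums is the crux of the argument.
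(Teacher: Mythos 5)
Your proof is correct, but it takes a genuinely different route from the paper's. The paper proves Lemma~\ref{lp-sp} by brute force: for $|A|=3,4,5$ and each relevant $y$ it lists the sums of $y+1$ distinct elements and rules out coincidences by explicit case analysis (the subcases for $|A|=5$, $y=2$ occupying most of the argument). You instead combine three structural observations --- the complementation bijection $s\mapsto\bigl(\sum_{x\in A}x\bigr)-s$, which gives $|A\psi^{y}A|=|A\psi^{|A|-y-2}A|$ and in particular $|A\psi^{|A|-2}A|=|A\psi^{0}A|=|A|$; translation invariance; and the covering statement $A\psi^{y}A=\mathbb{Z}_6$ for $|A|\ge y+4$ drawn from Proposition~\ref{prop1} and the claim following it --- to dispose of every pair $(|A|,y)$ except the self-dual case $|A|=4$, $y=1$, which you settle by checking the three translation classes $\{2,3,4,5\}$, $\{1,3,4,5\}$, $\{1,2,4,5\}$ (cardinalities $5,6,4$; I verified these, and the orbit count $6+6+3=15$ confirms all four-element subsets are covered). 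Your bookkeeping is complete: for $|A|\le 3$ only $y=|A|-1$ and $y=|A|-2$ occur, and for $|A|\ge 5$ every $y\le|A|-2$ is either $\le|A|-4$, equal to $|A|-3$ (dual to $y=1$), or equal to $|A|-2$. What your route buys is brevity and a clean identification of where the bound is tight (namely $\{1,2,4,5\}$, consistent with Lemma~\ref{gp}); what it costs is reliance on the claim after Proposition~\ref{prop1}, whose written justification in the paper is only sketched, whereas the paper's direct verification is self-contained. Neither argument as written extends automatically to Remark~\ref{lp-sp-global} for general even $N$, since your single critical case $|A|=4$, $y=1$ grows into a whole range of cases there.
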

\begin{proof}
Suppose $A = \{a_1,a_2,...,a_l\}$. If $y=l-1,$ then  $ |A \psi A |= | \{a_1+a_2+...+a_l\}| = 1.$\\
We shall prove the inequality for the cases $ |A|=3,4,5,$ as it follows trivially for the rest of the cases in $ \mathbb{Z}_6.$\\
\textbf{Case I :} $|A|=3.$ Let $A = \{a_1,a_2,a_3\}.$ \\
We have $A \psi A = \{a_1+a_2,a_2+a_3,a_1+a_3\}$ and all of these are distinct in congruence modulo $6.$ Hence $|A \psi A|=|A|=3.$\\
\textbf{Case II : } $|A|=4.$ Let $A = \{a_1,a_2,a_3,a_4\}. $ \\
\textbf{Case II.1 : } $y=1$\\
We then have $ A \psi A = \{ a_1+a_2,a_1+a_3,a_1+a_4,a_2+a_3,a_2+a_4,a_3+a_4 \}.$ It is clear that $a_1+a_2,a_1+a_3,a_1+a_4$ are distinct in modulo $6.$ Now suppose if we assume, $| A \psi A| < |A|$. We then have \\
$a_1+a_2 = a_3 + a_4,$ $a_1 + a_3 = a_2 + a_4$ and $a_1 + a_4 = a_2 + a_3.$ Hence
we get $a_2 = a_3 + 3 $ and $a_2 = a_4 + 3 .$\\
Therefore we have a contradiction and hence $ |A \psi A| \geq |A|.$\\
\textbf{Case II.2 :} $y=2$\\
We have $ A \psi A \psi A = \{ a_1+a_2+a_3,a_1+a_2+a_4, a_1+a_3+a_4, a_2+a_3+a_4 \}.$ It is clear that $ a_1+a_2+a_3, a_1+a_2+a_4, a_1+a_3+a_4$ are distinct. WLOG, if $ a_2+a_3+a_4 \equiv a_1+a_2+a_3. $ We then have $ a_4 = a_1,$ which is a contradiction. Hence, $|A \psi A \psi A| = |A|=4.$\\
\textbf{Case III : } $|A|=5.$ Let $ A = \{a_1,a_2,a_3,a_4,a_5\} .$\\ 
\textbf{Case III.1 :} $y=1$\\
We have $A \psi A = \{a_1+a_2,a_1+a_3,a_1+a_4,a_1+a_5,a_2+a_3,a_2+a_4,a_2+a_5,a_3+a_4,a_3+a_5,a_4+a_5 \}.$ It is clear that $a_1+a_2,a_1+a_3,a_1+a_4,a_1+a_5 $ are disitint in modulo $6.$ Therefore, $|A \psi A| \geq 4.$ Now suppose  $|A \psi A| < |A|$ that is $|A \psi A|=4.$ For the sake of notations, we shall denote the tuple $(a_i,a_j)$ by $(i,j).$ Consider $(1,2).$ Then it is clear that the congruences $(2,3),(2,4),(2,5)$ can not be equal to $(1,2).$ WLOG, let $(1,2)=(3,4).$ We then have two subcases \\
\textbf{Subcase III.1.1 : } $ (1,3)=(2,4), (1,4)=(2,5), (1,5)=(2,3).$ \\
The left out congruences are $(3,5),(4,5).$ We then have $(4,5)$ to be equal to $(1,2)$ or $(1,3).$ But then we have $(4,5)=(1,2)=(3,4)$ which implies $a_5 = a_3,$ or $(4,5)=(1,3)=(2,4)$ which implies $a_5=a_2$  a contradiction.\\
\textbf{Subcase III.1.2 :} $ (1,3)=(2,5), (1,4)=(2,3), (1,5)=(2,4).$\\ The left out congruences are $(3,5),(4,5).$ We then have $(4,5)$ to be equivalent to $(1,3).$ But then we have $(4,5)=(1,3)=(2,5)$ which implies $a_4=a_2,$ again a contradiction.\\
Therefore, $|A \psi A| \geq |A|. $\\
\textbf{Case III.2 : } $y=2$  \\
$ A \psi A \psi A = \{ (1,2,3),(1,2,4),(1,2,5),(1,3,4),(1,3,5),(1,4,5),(2,3,4),(2,3,5),(2,4,5),(3,4,5) \} .$ Consider the set $ A^{'} = \{a_2,a_3,a_4,a_5 \}.$ From the previous case for cardinality equal to four, we get $|A^{'} \psi A^{'} \psi A^{'}|=4.$ Hence $|A \psi A \psi A| \geq 4$. \\
Now suppose $| A \psi^2 A| < |A|,$ that is $|A \psi^2 A| = 4.$
The elements $(2,3,4),(2,3,5),(2,4,5),(3,4,5)$ are distinct in modulo $6.$ Now we start our cases\\
\textbf{Case III.2.1 :} $(3,4,5) = (1,2,3)$ \\ 
Since $(1,2,4)$ and $(1,2,5)$ can not be congruent to $(2,4,5).$ We have $(1,2,4)=(2,3,5)$ and $(1,2,5) = (2,3,4).$ But then $(1,4,5)$ being congruent to any of the four elements in $A^{'} \psi^2 A^{'}$ leads to a contradiction.\\
\textbf{Case III.2.2 :} $(1,2,3) = (2,4,5)$\\
\textbf{Case III.2.2a :} $(1,2,4) = (3,4,5)$\\
 The left over congruences are $(2,3,4)$ and $(2,3,5).$ It is clear that $(1,2,5) = (2,3,4).$ Consider the element $(1,3,5).$ This could be congruent to $(2,3,4)$ or $(2,4,5).$ But then it implies $(1,3,5)=(1,2,5)$ or $(1,3,5)  = (1,2,3)$ which is a contradiction.\\
 \textbf{Case III.2.2b :} $(1,2,4)=(2,3,5)$\\
 We then have $(1,2,5) = (3,4,5)$ or $(1,2,5) = (2,3,4)$ \\ 
Consider the element $(1,3,4)$. In either of the above cases, $(1,3,4)$ can not be congruent $(2,3,4)$ or $(3,4,5).$ Therefore it could be congruent to $(2,3,5)$ or $(2,4,5).$ But then this implies $(1,3,4)$ is congruent to $(1,2,4)$ or $(1,2,3),$ which is a contradiction.Hence $ |A \psi^2 A| \geq |A|. $\\
\textbf{Case III.3 :} $y=3$ \\
We have $A \psi A \psi A \psi A = \{(1,2,3,4),(1,2,3,5),(1,2,4,5),(1,3,4,5),(2,3,4,5)  \} .$\\ 
\textbf{Claim :} $|A \psi^3 A|=5.$\\
Consider the set $ F = \{1,2,3,4,5\}.$ We define $ F^* = \{ X \text{  } | \text{  } X \in 2^F , |X|=4 \}. $\\
Note that $|F^*|=5$. For any $X_i,X_j \in F^*$, we have that $ |X_i \cap X_j| = 3,$ by pigeonhole principle. Therefore any two elements in $A \psi ^3 A$ has to be distinct in modulo $6,$ or otherwise we have a contradiction by the above arguement. Hence proved.
\end{proof}
Our main theorems can be generalized to prove Gao and Thangadurai's conjecture $\ref{main-conjecture}$  for any even $N.$ The following remark is true. It is the generalization of the above lemma for even $N.$ 
\begin{remar}
\label{lp-sp-global}Let $N \in 2\mathbb{N}$ and let $A\subseteq  \mathbb{Z}_{N}.$ For any natural number $y < |A|,$
$$ | A \psi^y A | \geq |A|  \  \text{ if $1 \leq y < |A|-1$,} \
 | A \psi^y A | = 1 \  \text{if $y = |A|-1.$ }$$ 
\end{remar}
\begin{lem} 
\label{lp}Let $A,B \subseteq  \mathbb{Z}_{6}.$\\
(i) If $ |A|=4,$ then $|A \psi A| \geq 4$ and $A \psi A$ contains all odds.\\
(ii) If $ |A|=3$ and $|B
|=2,$ then $|A+B|\geq 3$ with equality iff $A, B$ are of form $\two.$ \\
(iii)If $|A|=3$, then $A \psi A$ contains even number of odds.\\
(iv)If $|A|=|B|=3,$ then $|A+B| \geq 3$ with equality iff $A,B$ are of form $\two.$ For $A=B$ we have $|A \psi A| = 3.$
\end{lem}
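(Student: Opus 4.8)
The four parts are small finite statements in $\mathbb{Z}_6$, and I would prove them by combining the cardinality facts already established in Lemma \ref{lp-sp} with elementary parity and coset bookkeeping.

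For (i), the inequality $|A \psi A| \ge 4$ is already contained in Lemma \ref{lp-sp} (the case $|A|=4$, $y=1$), so the only new assertion is that $A \psi A$ contains every odd element. I would split $A$ into its even part $E = A \cap \{0,2,4\}$ and odd part $O = A \cap \{1,3,5\}$. Since $|E|+|O| = 4$ and each parity class has only three elements, both $E$ and $O$ are nonempty. The odd elements of $A \psi A$ are exactly the sums $e + o$ with $e \in E$, $o \in O$ (these pairs are automatically of distinct elements, having different parity), so the odd part of $A \psi A$ equals the sumset $E + O$. Identifying $\{0,2,4\}$ and the odd coset $\{1,3,5\}$ each with $\mathbb{Z}_3$, Cauchy--Davenport gives $|E+O| \ge |E|+|O|-1 = 3$, so $E+O$ is all of the odd coset. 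With $|E|+|O|=4$ only three size profiles $(|E|,|O|)$ occur, so one may equally verify this by direct inspection.

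For (iii), I would argue purely by parity. By Lemma \ref{lp-sp} one has $|A \psi A| = 3$, and a sum $a_i + a_j$ is odd precisely when $a_i,a_j$ have opposite parity. Writing $e,o$ for the number of even and odd elements of $A$, the number of odd elements of $A \psi A$ equals the number of mixed-parity pairs, namely $eo$. Since $e + o = 3$ is odd, exactly one of $e,o$ is even, so $eo$ is even; this is the claim.

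For the sumset bounds in (ii) and (iv) the lower bound is immediate: for any single $b \in B$ the translate $A + b \subseteq A+B$ has size $|A| = 3$, so $|A+B| \ge 3$. The substance is the equality case, and here I would exploit the structure of $\mathbb{Z}_6$. Equality $|A+B| = 3$ forces all translates $A + b$ ($b \in B$) to coincide, i.e.\ $A$ is invariant under translation by every difference $b - b'$ of elements of $B$; thus $A$ is a union of cosets of the subgroup $H$ generated by these differences. Since $|A| = 3$, running through the subgroups of $\mathbb{Z}_6$ shows the only possibility is $H = \{0,2,4\}$, whence $A \in \{\{0,2,4\},\{1,3,5\}\}$ and every difference $b - b'$ lies in $\{2,4\}$, so all elements of $B$ share a parity. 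Both conclusions say $A$ and $B$ are of form $\two$. Conversely, when $A$ and $B$ are each all-even or all-odd, a one-line computation ($\{0,2,4\}+\{0,2,4\} = \{0,2,4\}$, $\{0,2,4\}+\{1,3,5\} = \{1,3,5\}$, $\{1,3,5\}+\{1,3,5\}=\{0,2,4\}$, up to translation) gives $|A+B| = 3$. The remaining clause of (iv), that $|A \psi A| = 3$ when $A = B$, is exactly the case $|A|=3$ of Lemma \ref{lp-sp}. The only delicate point throughout is this equality analysis, where I must ensure the coset/subgroup dichotomy correctly rules out the order-$2$ subgroup $\{0,3\}$ (impossible, since it would force $|A|$ even) and faithfully translates the condition $H = \{0,2,4\}$ into the statement that both $A$ and $B$ are of form $\two$.
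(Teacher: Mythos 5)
Your proof is correct, but for the substantive parts — the equality analysis in (ii) and (iv) — it takes a genuinely different and more structural route than the paper. The paper proves (ii) by element chasing: it extracts a two-element subset $A'\subseteq A$ of form $\two$, invokes Lemma \ref{gp} with $n=3$ to get $|A'+B|\geq 3$, and then argues directly that the third element of $A$ produces a fourth sum unless everything is of form $\two$; part (iv) is then deduced from (ii) by passing to a two-element subset of $B$. You instead observe that $|A+B|=3$ forces all translates $A+b$ to coincide, so $A$ is a union of cosets of the subgroup $H$ generated by the differences of $B$, and the divisibility constraint $|H|\mid 3$ together with $|B|\geq 2$ pins down $H=\{0,2,4\}$; this handles (ii) and (iv) uniformly, yields both the structure of $A$ and the parity condition on $B$ in one stroke, and automatically covers the case (stated separately at the end of the paper's proof of (ii)) where $A$ is of form $\two$ but $B$ is not. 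The trade-off is that your argument leans on the subgroup lattice of $\mathbb{Z}_6$ (and you correctly flag the need to exclude $\{0,3\}$), whereas the paper stays entirely at the level of its two-element sumset lemma. For (i) the two proofs share the even/odd decomposition and the identification of the odd part of $A\psi A$ with $E+O$; you close with Cauchy--Davenport in $\mathbb{Z}_3$ where the paper again cites Lemma \ref{gp} plus a separate check of the $(3,1)$ split. For (iii) your count of mixed-parity pairs $eo$ with $e+o=3$ is an equivalent one-line parity argument to the paper's observation that the three elements of $A\psi A$ sum to $2(a_1+a_2+a_3)$. Both versions are complete; yours is arguably cleaner and generalizes more readily to $\mathbb{Z}_{2n}$.
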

\begin{proof}
(i) We have $|A \psi A| \geq 4, $ hence $A$ must contain atleast one odd. If $A$ contains all odds, we are done. If $A$ contains all evens, then also we are done, since $|A|=4$ and number of evens is three, there must be one odd element in $A.$ Summing this element with the three evens, we obtain three distinct odd elements. Let us suppose $A$ has two odds and two evens. Let $ O = \left\lbrace a_1,a_2 \right\rbrace $ where $a_1,a_2$ both are odd and  $E = \left\lbrace b_1,b_2 \right\rbrace $ where $b_1,b_2$ both are even.  By applying lemma$~\ref{gp}$ with $n=3,$ we have $ |O+E|  \geq 3.$ But we also know $a_i+b_j$ is always odd for $1 \leq i \leq j \leq 2$. Therefore, we conclude $ O+E =  \left\lbrace 1,3,5 \right\rbrace .$ 
\\
(ii)
It is clear that if $A, B$ are of form $\Romannum{2},$ then $|A+B| = 3.$ Suppose $A$ is not of form $\Romannum{2},$ let
$ A =\left\lbrace a_1, \ a_2, \ a_3 \right\rbrace $ and $  B=\left\lbrace b_1, \ b_2 \right\rbrace.$ There exists a subset 
$A\textprime =\left\lbrace a_1,a_2\right\rbrace$ which is of form $\Romannum{2}.$ By applying lemma $~\ref{gp}$ with $n=3,$  
$|A\textprime + B| \geq 3$ as $A\textprime$ is not of form $\one.$  If $|A\textprime + B| \geq 4,$ then we are done. Suppose $|A\textprime + B|=3,$ we know that two distinct elements $a_1+b_1,$  $a_1+b_2$ belong to $A\textprime + B.$  Hence we have $a_1 + b_2 = a_2 +b_1.$  Consider $a_3+b_2.$  It cannot be $a_1+b_2,$ $a_2+b_2.$ If $a_3 + b_2 = a_1 +b_1,$ then 
$a_3-a_1=b_1-b_2=a_2-a_1$ implies that $a_3 = a_1,$ which is a contradiction. Hence $a_3+b_2$ is distinct from $A\textprime+B$  and $|A + B| \geq 4.$ It is clear that $|A + B| \geq 4$ if $A$ is of form $\two$ and $B$ is not of form $\two.$
\\
(iii)  Consider $ A \psi A =  \left\lbrace c_1= a_1+a_2,c_2 = a_2+a_3 ,c_3 =a_3+a_1 \right\rbrace. $ We have $c_1+c_2+c_3 = 2(a_1+a_2+a_3),$ an even number. It follows that two elements in $A$ have to be odd or none of them are odd. Hence $A \psi A$ has even number of odds.\\
(iv) If $A$ and $B$ both were of form $\two,$ then $A+B$ is also of from $\two.$ Suppose WLOG , $A$ is not of form $\two$ and let $B\textprime$ is a subset of $B$ with cardinality 2. Then by (ii),
 $|A+B\textprime| > 3$. But we know that $A+B\textprime \subseteq A+B.$ Hence we are done.
 \end{proof}

\begin{lem} 
\label{fourzero}
Let $a,b,c,d \in  \mathbb{Z}_6$ be distinct elements. Define
$$ P = \left(2(a+b+c),2(a+b+d),2(a+c+d),2(b+c+d) \right) \in \mathbb{Z}_6^4. $$ $$ Q = \left( 2(a+b)+c+d,2(a+c)+b+d,2(b+c)+a+d,2(a+d),b+c \right) \in \mathbb{Z}_6^4. $$ Then one of the following is true.\\
\textbf{(i)} Two zeroes in the entries of P
\textbf{(ii)} Two zeros in the entries of Q
\textbf{(iii)} One zero entry in P and Q.
\end{lem}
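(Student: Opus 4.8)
The plan is to work through the Chinese Remainder decomposition $\mathbb{Z}_6 \cong \mathbb{Z}_2 \times \mathbb{Z}_3$ and to separate the ``mod $3$'' information, which controls $P$ completely, from the ``mod $2$'' information, which is only needed to analyse $Q$. First I would record the elementary observation that every entry of $P$ is even, since each has the form $2(\cdot)$, so a $P$-entry $2(a+b+c)$ vanishes in $\mathbb{Z}_6$ if and only if $a+b+c \equiv 0 \pmod 3$. Writing $s = a+b+c+d$, the entry of $P$ that omits an element $x$ vanishes exactly when $x \equiv s \pmod 3$; hence the number of zeros among the entries of $P$ equals the number of elements of $\{a,b,c,d\}$ lying in the residue class of $s$ modulo $3$, and this count depends only on the mod-$3$ residues of $a,b,c,d$.

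Next I would exploit that $a,b,c,d$ are distinct and that each residue class modulo $3$ contains exactly two elements of $\mathbb{Z}_6$. Thus the four residues $\bar a,\bar b,\bar c,\bar d$ occupy the three classes of $\mathbb{Z}_3$ in one of exactly two patterns: either $2+1+1$ (one class doubled, two classes singly occupied) or $2+2+0$ (two classes doubled, one empty). A short computation of $s \bmod 3$ in each pattern shows that in the $2+1+1$ case the doubled class is precisely the class of $s$, so $P$ has two zeros and conclusion \textbf{(i)} holds; while in the $2+2+0$ case no element lies in the class of $s$ (that being the empty class), so $P$ has no zeros. This reduces the whole lemma to proving that in the $2+2+0$ pattern the tuple $Q$ has two zeros, i.e.\ conclusion \textbf{(ii)}.

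For the surviving $2+2+0$ case I would use the key structural consequence that each occupied residue class, being doubled, must contain \emph{both} of its two elements; since the two elements of any class of $\mathbb{Z}_6$ are one even and one odd (they differ by $3$), the set $\{a,b,c,d\}$ is the union of two full residue classes and contains exactly two even and two odd elements. Rewriting a $Q$-entry with doubled pair $\{x,y\}$ as $s + (x+y)$, I would check that it vanishes modulo $3$ precisely when $\{x,y\}$ straddles the two occupied classes, and vanishes modulo $2$ precisely when the complementary pair has equal parity, equivalently when $\{x,y\}$ itself has equal parity. Combining these, exactly two doubled pairs make a $Q$-entry vanish in $\mathbb{Z}_6$: the (unique) all-even straddling pair and the (unique) all-odd straddling pair.

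The main obstacle I anticipate is the final bookkeeping: one must verify that these two distinguished pairs actually occur among the doubled pairs listed in $Q$, and that the residual parity/mod-$3$ combinations do not accidentally produce a $Q$-entry equal to a wrong element of $\{0,2,4\}$ or $\{1,3,5\}$. This is where the argument is most delicate, since the two vanishing pairs are disjoint and complementary, and one has to track them through the specific entries of $Q$ rather than over all $\binom{4}{2}$ doubled pairs; handling the boundary configurations here is what forces the disjunctive form of the statement and, in the worst overlap, the mixed conclusion \textbf{(iii)} with one zero in each of $P$ and $Q$. Once this matching is confirmed, the three cases \textbf{(i)}, \textbf{(ii)}, \textbf{(iii)} exhaust all possibilities and the lemma follows.
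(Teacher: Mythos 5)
Your reduction via $\mathbb{Z}_6 \cong \mathbb{Z}_2\times\mathbb{Z}_3$ is a genuinely different, and in fact sharper, route than the paper's. The paper sets $R=(a+b+c,a+b+d,a+c+d,b+c+d)$, notes its entries are distinct, observes that a zero of $P$ corresponds to an entry of $R$ lying in $\{0,3\}$ and a zero of $Q$ to a pair of entries of $R$ summing to $0$, and then splits according to whether both, neither, or exactly one of $0,3$ occurs in $R$. Your mod-$3$ occupancy analysis ($2+1+1$ versus $2+2+0$) proves the same dichotomy more transparently and yields the extra information that the number of zeros in $P$ is always $0$ or $2$, never $1$, so the paper's third case and alternative \textbf{(iii)} are actually vacuous. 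Your intermediate claims all check out: in the $2+1+1$ pattern the doubled class is the class of $s=a+b+c+d$, giving \textbf{(i)}; in the $2+2+0$ pattern $\{a,b,c,d\}$ is a union of two full mod-$3$ classes, $s$ is even, and among the six sums $2(x+y)+z+w$ exactly two vanish, namely those attached to the all-even and the all-odd straddling pairs.

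However, the gap you flag at the end is real and, as the statement is literally written, cannot be closed. Those two vanishing pairs are complementary, whereas the four entries listed in $Q$ correspond only to the doubled pairs $\{a,b\},\{a,c\},\{b,c\},\{a,d\}$, among which only $\{b,c\}$ and $\{a,d\}$ form a complementary pair; if the vanishing pairs are $\{a,b\},\{c,d\}$ (or $\{a,c\},\{b,d\}$), only one of them appears in $Q$, and your proposed rescue via \textbf{(iii)} fails because in the $2+2+0$ case $P$ has no zeros at all. Concretely, $(a,b,c,d)=(1,5,2,4)$ gives $P=(4,2,2,4)$ and $Q=(0,3,1,5)$, so none of \textbf{(i)}, \textbf{(ii)}, \textbf{(iii)} holds. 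The lemma is only correct if $Q$ is read as the full six-tuple of values $2(x+y)+z+w$ over all pairs $\{x,y\}\subset\{a,b,c,d\}$ (equivalently, all pairwise sums of entries of $R$); the paper's own proof silently makes this reading when it asserts that two inverse entries of $R$ produce a zero of $Q$, and it also contains the typo ``$1,2,3,4$'' for ``$1,2,4,5$''. Under that corrected reading your argument is complete and in fact proves the stronger conclusion that \textbf{(i)} or \textbf{(ii)} always holds.
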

\begin{proof} Let $R = \left( a+b+c,a+b+d,a+c+d,b+c+d\right) \in \mathbb{Z}_6^4.$ All four entries of $R$ are distinct. Now if $ 0$ and $3$ both are entries of $R,$ then (i) is true.  If $0$ or $3$ is not an entry of $R,$ then $1$, $2,$ $3,$ and $4$ are entries of $R.$ So (ii) is true.  If either $0$ or $3$ is an entry of $R,$ then there are two entries in $R,$ which are inverses of each other. So (iii) is true.
\end{proof}
\begin{cor}
\label{maincor} Let $A = \left\lbrace a, b,c,d,e \right\rbrace$ be distinct elements of $\mathbb{Z}_6.$   Then $2(a+x+y) = 0$, for some $x,y \in A - \left\lbrace a \right\rbrace.$
\end{cor}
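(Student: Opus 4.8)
The plan is to reduce the statement modulo $3$. Since $2(a+x+y)\equiv 0\pmod 6$ holds if and only if $a+x+y\equiv 0\pmod 3$, it suffices to produce two distinct elements $x,y\in A\setminus\{a\}$ whose residues modulo $3$ sum to $-a\pmod 3$. First I would pass to the reduction map $\mathbb{Z}_6\to\mathbb{Z}_3$, whose fibres each have exactly two elements. Because $A$ consists of $5$ distinct elements of $\mathbb{Z}_6$, its image multiset in $\mathbb{Z}_3$ has total size $5$ with every class occurring at most twice; hence the residue counts of $A$ are $2+2+1$, i.e. two residue classes are attained twice and one is attained once.

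Next I would delete $a$ and record the residue-count vector $(n_0,n_1,n_2)$ of the four remaining elements $b,c,d,e$, where $n_i$ counts those congruent to $i\pmod 3$. Since each $n_i\le 2$ and $n_0+n_1+n_2=4$, up to relabelling the classes this vector is either $(2,2,0)$ or $(2,1,1)$. The heart of the argument is the observation that in each of these two cases the set of sums $\{\,\bar x+\bar y : x,y\in\{b,c,d,e\},\ x\ne y\,\}$ equals all of $\mathbb{Z}_3$. I would verify this by a short direct check: in the $(2,2,0)$ case the attainable sums are $2i,\,2j,\,i+j$ for the two populated classes $i,j$, and using $i+j+k\equiv 0\pmod 3$ one sees these exhaust $\mathbb{Z}_3$; in the $(2,1,1)$ case the attainable sums are $2i$ together with $i+j,\,i+k,\,j+k$, and since $\{i+j,i+k,j+k\}=\{-k,-j,-i\}=\mathbb{Z}_3$ the claim again holds.

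Having established that every target residue is attained, I would take the target $-a\pmod 3$. The witnessing pair $\{x,y\}$ consists of two distinct elements of $A\setminus\{a\}$ (within a doubly-populated class the two preimages are distinct elements of $\mathbb{Z}_6$, and cross-class pairs are automatically distinct), so $x,y\in A-\{a\}$ satisfy $a+x+y\equiv 0\pmod 3$ and therefore $2(a+x+y)=0$ in $\mathbb{Z}_6$.

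The only delicate point, and the step I expect to require the most care, is the bookkeeping that guarantees $x\neq y$: one must make sure the pair produced really uses two distinct members of $A$ rather than a single element counted twice, which is exactly why the $2+2+1$ residue structure (rather than a mere existence of complementary residues) is needed. I note that the subcase in which one of the $a$-triples already vanishes can alternatively be read off from Lemma \ref{fourzero} applied to $\{a,b,c,d\}$, since three of the four entries of the tuple $P$ there involve $a$; the residue-count argument above handles all cases uniformly.
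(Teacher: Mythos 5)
Your proof is correct. The paper's own argument is shorter but relies on machinery you avoid: it sets $A' = \{b,c,d,e\}$, invokes Lemma \ref{lp}(i) (which asserts that for any four-element subset of $\mathbb{Z}_6$ the restricted sumset $A'\psi A'$ contains all three odd residues $1,3,5$), and then observes that one of $-a$, $-a+3$ is odd, so the required pair exists. You instead quotient by $3$: since $2m=0$ in $\mathbb{Z}_6$ iff $m\equiv 0 \pmod 3$, you only need the restricted pair-sums of $\{b,c,d,e\}$ to cover $\mathbb{Z}_3$, and you get this from the $2{+}2{+}0$ / $2{+}1{+}1$ residue-count analysis. The two arguments are dual exploitations of $\mathbb{Z}_6\cong\mathbb{Z}_2\times\mathbb{Z}_3$ — the paper works with the parity (mod $2$) structure inside $\mathbb{Z}_6$ and quotes a lemma whose proof is itself a case analysis on how many odd elements $A'$ contains, while you work directly in the mod-$3$ quotient and need no external lemma. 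What your route buys is self-containedness and a cleaner handle on the distinctness issue (two elements in the same mod-$3$ fibre differ by $3$, hence are distinct in $\mathbb{Z}_6$); what the paper's route buys is brevity and reuse of a lemma already needed elsewhere, plus the slightly stronger conclusion that the representative achieved is the odd one of $-a$, $-a+3$. Your closing observation about Lemma \ref{fourzero} is a harmless aside and is not needed.
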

\begin{proof} Let $A \textprime = \left\lbrace b,c,d,e \right\rbrace.$
By lemma $~\ref{lp}$ (ii), $A\textprime \psi A\textprime$ contains all odds. Either $-a$ or $-a+3$ is odd. It follows that  $2(a+x+y) = 0$, for some $x,y \in A - \left\lbrace a \right\rbrace.$
\end{proof}

\begin{center}
\textbf{Case $35$ \ : \ 4+3+2+2+2}
\end{center} Let the row with 4 elements be $a,$ the one with three be $b,$ the one with 2 be $c,$ $d,$ $e,$ and $f$ be the value of the remaining row. If $f+3=a,$ then $2a+b+c+d+e=0.$ We know that $|A \psi A|=4$ and $|B+C+D+E| \geq 3,$  since $|B| =3.$  If $f+3=b,$ then $a+2b+c+d+e=0.$  We know that $|A+C+D+E| \geq 4$ and $|B\psi B| \geq 3.$  If $f+3=c,$ then we have $a+b+2c+d+e=0.$ We know that $|A| = 4$ and $|B+C \psi C+D+E| \geq 3,$ since $|B| =3.$  The proofs for the cases $f+3 = d$ or $e$ follows similarly.
\begin{center}
\textbf{Case  $36$ \ : \ 4+3+2+2+1+1}
\end{center}
Let the row with 4 elements be $a,$ the one with 3 be $b,$ the one with 2 be $c$ and $d,$ and the remaining be $e$ and $f.$ Let the set of second coordinates of the elements in row $a$ be $A,$ $b$ be $B$ and so on.\\
\textbf{Case I : } $a+3\neq b.$ WLOG, suppose $a+3=d$ or $a+3=e,$ then we have $2a+b+c+e+f=0$ or $2a+b+c+d+f=0$ respectively. We also know that $|A \psi A| \geq 4.$ The sets $B+C+E+F$ or $B+C+D+F$ contains atleast 3 elements because $|B| =3.$ Hence one of them is an inverse of an element in $A \psi A.$ Hence $A \psi A+B+C+E+F$ or $A \psi A+B+C+D+F$ contains zero.\\
\textbf{Case II :} $a+3=b.$ 
This implies that $c+3$ is either $d$ or $e$ WLOG. Hence either $a+b+2c+e+f=0$ or $a+b+2c+d+f=0$ respectively. $|A|=4$ and $B+C \psi C+E+F$ or $B+C \psi C+D+F$ has atleast three elements because $|B|=3.$ The inverse of an elements in the latter sets must belong in $A.$ 
\newpage 
\begin{center}
\textbf{Case $37$ \ : \ 4+2+2+2+2+1}
\end{center}
 Let $a,b,c,d,e,f$ denote the row value for the corresponding partition in the given order respectively. Let the set of second coordinates of the elements in rows be $A,B,C,D,E,F$ respectively. Since we have considered all rows, $ a+b+c+d+e+f \neq 0$ as these are distinct elements in $\ZZ_6.$  Assume $f = a+3.$ Then we have $ 2a+b+c+d+e = 0.$ Let $A = \left\lbrace \alpha_1,\alpha_2,\alpha_3,\alpha_4 \right\rbrace,
   B = \left\lbrace \beta_1,\beta_2 \right\rbrace,
   C = \left\lbrace \gamma_1,\gamma_2 \right\rbrace,
  D = \left\lbrace \delta_1,\delta_2 \right\rbrace,\\
   E = \left\lbrace \epsilon_1,\epsilon_2 \right\rbrace \ \mathrm{and} \ F = \left\lbrace \zeta \right\rbrace$.\\
\textbf{Case I :} At least one of the rows $B,C,D,E$ is not of form $\one.$ WLOG, let $B$ be not of form $\one.$ Consider the set $ A \psi A +B+C+D+E $. Then it follows from Lemma $\ref{gp}$ that $ |B + C| \geq 3 ,$ so $ |B+C+D+E | \geq 3$.  We also have $|A \psi A| \geq 4$ by Lemma \ref{lp}, Since $ |B+C+D+E| +|A \psi A| \geq 7 > 6  .$
\textbf{Case II :}
The rows $B,C,D,E$ are of form $\one.$
By Lemma $\ref{gp},$ $ |B+C+D+E| = 2. $ Let $B+C+D+E = \left\lbrace \beta+\gamma+\delta+\epsilon,\beta+\gamma+\delta+\epsilon+3 \right\rbrace$. Therefore $B+C+D+E$ contains an odd. By Lemma $\ref{lp}$ (ii), $ A \psi A$ contains all odds and therefore the inverse of the odd element in $B+C+D+E$ exists in $A \psi A.$ 
Now $a+3$ can be either $b,c,d$ or $e.$ WLOG assume $e=a+3,$  we have $ 2a+b+c+d+f = 0.$ The proof is similar by relplacing $E$ with $F$ as above.

\begin{center}
 \textbf{Case $31$ \ : \ 4+4+2+1+1+1}
 \end{center}
\textbf {Case I : } We have $2a+b+c+d+e = 0.$ By Lemma 4.5(i) we have
$|A\psi A| \geq 4.$ Also, $|B+C+D+E| \geq 4.$\\
\textbf {Case II : } $f=c+3.$ We have $2c+a+b+d+e=0.$ Also, $|A|=4$ and
$|C\psi C+B+D+E| \geq 4.$
\textbf {Case III : } $f=e+3.$\\
\textbf{sub case III.1 :} $b=a+3.$ WLOG, let $a=f+1, \ b= f+4, \ c=f+2, \ d=f+5.$\\
Now $2(b+c)+a+d =0,$ $|B \psi B + C \psi C| \geq 4$ and $|A+D| \geq 4.$
\\
\begin{center}
\textbf{Case $30$ \ : \ 4+4+2+2+1}
\end{center}
Let  the rows with 4 elements be $a$ and $b,$ the rows with 2 elements be $c$ and $d,$ the one with single element be $e$ and $f$ be the remaining row.\\ 
{\textbf Case I : $f+3 \neq e$.}
WLOG, assume $f+3=a$ or $f+3=c.$ We have $2a+b+c+d+e=0,$ $|A \psi A| = 4,$ $|B|=4$  or $a+b+2c+d+e=0,$ $|A|=4$ and $|B|=4.$ 
{\textbf Case II : $f+3=e.$} 
{\textbf Case II.1: $a=b+3$}\\
If $a=b+3$ and $f=e+3$, we know that $c=d+3$. We claim that $3a+b+c+e$ or $3a+b+d+e$ is $0.$ The difference  is $c-d$ which is 3. The sum is $2b+c+d+2e=2b+2c+2e+3$. Since $2b, 2c, 2e$ are different congruences modulo 6 and all of them are even, they have to be $0, 2, 4$ in some order. Hence their sum is zero modulo 6. It follows that $2b+2c+2e+3 = 3.$ This proves our claim as if the sum and difference of two numbers is $\equiv 0 \pmod{3}$, then the numbers are three and zero.
WLOG, let $3a+b+c+e = 0.$ Since $|A\psi A\psi A| =4,$ and $|B|=4,$ we are done.\\
{\textbf Case II.2 : $a \neq b+3$}\\
Consider $3a+2b$ and $2a+3b.$ They are distinct and one of them has inverse in $\lbrace a,b,c,d,e \rbrace.$ Suppose WLOG that $3a+2b$ has its inverse in $\lbrace a,b,c,d,e \rbrace.$ It cannot be $'a'$ as that would imply that $4a+2b=0$ or equivalently, $a=b+3$ which is a contradiction. If it is $'b',$ then $3a+3b=0.$ Then $ 0 \in A \psi A\psi A+B\psi B\psi B,$ since $|A\psi A\psi A| = 4$ and $|B\psi B\psi B| = 4.$  If it not $b,$ we would choose three from row $a,$ two from $b,$ and one from someother row $x.$
The second coordinates would be from $A \psi A\psi A+B\psi B+ X$ if $X$ is the set of second coordinates of the $x$-th row. We have $|A\psi A\psi A|=4,$ and $|B\psi B+X| \geq 4.$ 
\begin{center}
 \textbf{Case $41$ \ : \ 3+3+2+2+2+1}
 \end{center}	
\textbf {Case I : } $c=d+3.$ We have $2c+f+a+b+e =0.$ \\
\textbf {Case I.1 :} $A$ or $C$ are not of form $\two.$\\
\textbf {Case I.1a :} $A$ is not of form $\two.$ Then $|A+E| \geq 4$ by Lemma \ref{lp}. Since $| C \psi C+F|=1,$ we have
 $|A+E+C \psi C+F| \geq 4.$ Since $|B|=3,$ by Lemma $\ref{main},$  $0 \in C \psi C+F +A+B+E. $ The same proof applies if $E$ or $D$ are not form $\two.$ The case when all rows are of form $\one$ or $\three,$ follows the same way. \\
\textbf {Case I.1b :} $C$ is not form $\two.$ If $e=a+3,$ $2a+b+c+d+f = 0.$ Then $|A \psi A| \geq 3,$ and $|B +C| \geq 4,$ therefore 
$0 \in A\psi A+B+C+D+F.$ Otherwise, $e=f+3,$ $2e+a+b+c+d = 0.$ Then $|E \psi E + A| \geq 3,$ and $|B+C| \geq 4,$ therefore
$0 \in E \psi E +A+B+C+D.$\\
{\textbf Case I.2 :} All rows are form $\two$. Consider $a,b,c,d,e.$ From Corollory $\ref{maincor},$ we know that $2(a+x+y)=0$ for some $x,y \in \left\lbrace b,c,d,e \right\rbrace.$ Let $X$ and $Y$ be the second coordinates of the rows $x, y$ respectively. $A \psi A$ contains all evens and $X \psi X,$ $Y  \psi Y$ contains evens, therefore $ 0 \in A\psi  A + X \psi X + Y \psi Y.$\\
\textbf {Case II : $c=f+3.$} We have $2c+a+b+e+d=0.$ \\
\textbf {Case II.1 :} Let $D$(or $E$) be not of form $\two.$ Then  $ |B+D| \geq 4 $ (or  $ |B+E| \geq 4 $  by Lemma $\ref{lp} (ii).)$ Also $ |A+E| \geq 3, $ which solves for this particular case, since $ C \psi C$ contributes only one element.\\
\textbf {Case II.2 :} If $C,$ $D$ and $E$ are form $\two,$ then at least one of them is zero $(2a+c+d),2(b+c+d),2(a+d+e),2(b+d+e)$  by Corollory $\ref{maincor}.$  We also have $A \psi A$ and $B \psi B$ contain all evens and $ C \psi C , D \psi D $ are subset of evens, which solves this case.\\
\textbf {Case II.3 :} $C$ is not form $\two.$\\
 \textbf {Case II.3a :} If $a=e+3,$ then $2a+b+c+d+f = 0.$ We have $|B+C| \geq 4,$ and $|A \psi A +D+F| \geq 3.$ Therefore, we have $0 \in A \psi A +B+C+D+F.$\\
 \textbf {Case II.3b :} If $d=e+3,$ then $2d+a+b+c+f = 0.$ We have $|B+C| \geq 4,$ and $|A+D \psi D +F| \geq 3.$ Therefore, we have $0 \in D \psi D +A +B+C+F.$\\
\textbf {Case III : $c=b+3$}\\
\textbf{ Case III.1 :}  $A,C,D,E$ are not form $\two.$ WLOG $C$ is not of form $\two.$ We have two subcases\\ 
\textbf {Case III.1a :} If $d=a+3,$ then $2a+b+c++e+f = 0$ and $|A+C| \geq 4$ and $|B| = 3.$ solves this case.\\
\textbf {Case III.1b :} If $f=a+3,$ it follows similarly.\\ 
\textbf {Case III.2 :} If all rows were of form $\two.$ Consider $a,b,c,d,e.$ From Corollory $\ref{maincor},$ we know that $2(a+x+y)=0$ for some $x,y \in \left\lbrace b,c,d,e \right\rbrace.$ $A \psi A$ contains all evens and $X \psi X,$ $Y  \psi Y$ contains evens, for $X,Y \in \left\lbrace B,C,D,E \right\rbrace$ and therefore $ 0 \in A\psi  A + X \psi X + Y \psi Y.$
\begin{center}
\textbf{Case $42$ \ : \ 3+2+2+2+2+2}
\end{center}
Assume WLOG $f=a+3$ and we have the first coordinate $2a+b+c+d+e=0.$     
{\textbf Case I :} There exists  atleast one row with form $\one$ and there exists atleast one row with form $\two.$ WLOG let row $B$ be form $\one$ and row $C$ be form $\two.$  By Lemma $\ref{lp} (ii),$ we have $ |A \psi A + B| \geq 4 .$ Then by Lemma $\ref{gp},$ $ |C+D| \geq 3$ and therefore $ |C+D+E| \geq 3 .$ Hence by Lemma $\ref{main},$  $0 \in \ A \psi A +B+C+D+E.$
{\textbf Case II:} All the rows are of form $\two.$  
By Lemma $\ref{lp} (iii),$ we have $ A \psi A$ contains even number of odds. Suppose it contains two odds and an even. Since $A \psi A$ is not of form $\two,$ we have by Lemma $\ref{lp} (ii),$ $ | A \psi A + B | \geq 4 $ and from Lemma $\ref{gp},$ we have $ | C + D | \geq 3 $ and  $ | C + D + E | \geq 3 $
which solves one part. Suppose $ A \psi A$ contains only evens. Then WLOG consider $ b=a+1,c=a+2,d=a+4,e=a+5.$ We have $ 2a+2b+2e = 0.$  We have $B \psi B + E \psi E$ to be an even number and  since $A \psi A$ contains all evens, we are done.\\
{\textbf Case III :} Rows $B,C,D,E$ are of form $\one.$\\
Let $A = \left\lbrace \alpha_1,\alpha_2,\alpha_3 \right\rbrace,
   B = \left\lbrace \beta,\beta+3 \right\rbrace,
   C = \left\lbrace \gamma,\gamma+3 \right\rbrace,
  D = \left\lbrace \delta,\delta+3 \right\rbrace,
   E = \left\lbrace \epsilon,\epsilon+3 \right\rbrace , F = \left\lbrace \zeta_1,\zeta_2 \right\rbrace.$ \\
   Since $f=a+3$,  WLOG, assume $b=a+1,c=a+2,d=a+4,e=a+5.$ Hence $2(a+b+e),2(a+c+d),2(a+f)+b+e,2(a+f)+c+d,2(b+e)+c+d,2(c+d)+b+e$ are $0.$ Let
 $$ Y =  B \psi B + E \psi E \cup  C \psi C + D \psi D \cup
 F \psi F + B + E \cup F \psi F + C + D  \cup B \psi B + E \psi E + C + D \cup C \psi C + D \psi D + B + E,$$
$$ X = \left\lbrace 2\beta+2\epsilon , 2\gamma+2\delta , \zeta+\beta+\epsilon, \zeta+\beta+\epsilon+3 , \zeta+\gamma+\delta,\zeta+\gamma+\delta+3 \right\rbrace,$$
$$ Z =      
    \left\lbrace 2\beta + 2 \epsilon + \gamma + \delta , 2\beta + 2 \epsilon + \gamma + \delta +3 ,
     2\gamma + 2 \delta  + \beta + \epsilon , 2\gamma + 2 \delta  + \beta + \epsilon+3 \right\rbrace,$$
where $ \zeta = \zeta_1 + \zeta_2.$
Define $ W = A \psi A + X.$ Note $ Y = W \cup Z.$
If $0 \in  Y$, then we are done. Suppose zero isn't in $Y,$ then  $\beta+\epsilon \neq \gamma+\delta $ and $\beta+\epsilon \neq \gamma+\delta+3 ,$ else  $ 0 \in Z.$ Hence we have $\zeta+\beta+\epsilon, \zeta+\gamma+\delta,\zeta+\gamma+\delta+3$  and $\zeta+\beta+\epsilon+3$ are distinct modulo 6. Hence we have $|X| \geq 4. $  By Lemma $\ref{lp} (iv),$ we have $ |A \psi A| = 3.$ Therefore $|A \psi A| + |X| = 7 >6$ and it follows from Lemma $\ref{main},$  $0 \in A \psi A + X = W,$ which is a contradiction. 

\begin{center}
\textbf{ Case $40$ \ : \ 3+3+3+2+1+1}
\end{center}
 	\textbf{Case I : $ d = c+3$} We have $2c+a+b+e+f = 0. $ Consider the set $ C \psi C +A+B+E+F.$\\ 
 	\textbf{Case I.1 :}  If at least one of $A,B,C$ were not of form $\two,$ then WLOG , $A$ is not form $\two,$ we have $|A + B| \geq 4 $ , by Lemma $~\ref{lp}$ (iv) and $ |C \psi C| = 3.$ Therefore by Lemma $\ref{main},$ it follows that $0 \in C \psi C +A+B+E+F.$ \\
\textbf{Case I.2 :} Suppose $A,B,C$ were of form $\two$ and $D$ is with different parity, we have two sub cases \\
 	\textbf{subcase I.2.1 :} $a+3 = b$ \\
In this case, we have $2a+c+d+e+f = 0 $  By Lemma 4, we have $ |C+D| \geq 4 $ and we have $|A \psi A + E+F| = 3.$ By lemma $\ref{main},$ it follows that $0 \in A \psi A +C+D+E+F.$ \\
 	\textbf{subcase I.2.2 :} $a+3=e$ \\
 	We have the same proof as above, by replacing $e$ with $b$ and $E$ with $B.$\\
 	\textbf{Case I.3 :} Now if $A,B,C,D$ all were of form $\two.$
Let $T = \left\lbrace a,b,c,d \right\rbrace$ and suppose $T \neq \left\lbrace a'+1,a'+2,a'+4,a'+5 \right\rbrace$ for some $a' \in \mathbb{Z}_6$. Then by Lemma $\ref{fourzero},$ we have $2(x+y+z) $ for some $x,y,z \in T$. Consider the set  $X \psi X + Y \psi Y + Z \psi Z.$ Since all rows are of form $\two,$ we have $P \psi P$ contains all evens for $P =A,B,C.$ Hence by Lemma $\ref{main},$ we conclude that there exists $ 0 \in X \psi X + Y \psi Y + Z \psi Z.$ Therefore  assume that $a=a'+1,b=a'+4,c=a'+2,d=a'+5$ for some $a' \in \mathbb{Z}_6$ (If $a,b,c,d$ are in different order from $\{ a'+1,b=a'+4,c=a'+2,d=a'+5 \},$ then the same proof works). WLOG let $e=a', f=a'+3.$ \textbf{Case I.3.1 :} Suppose $B,C$ were of the same parity type. We have $2(a+d)+b+c = 0.$  We have $A \psi A , D \psi D$ and $B+C$ contain all evens and hence $0 \in A \psi A + D \psi D + B + C.$ Similarly it holds true when $A$ and $C$ were of same parity type. \\
Hence we assume that $B$ and $C$ are of different parity  type and $A$ and $D$ are of different parity types.\\
\textbf{Case I.3.2 :} Suppose $E$ is even and $F$ is odd, then we have the following sub cases \\
\textbf{Sub case I.3.2a :} $A,B$ are of same parity type. \\ By assumption, $B+C$ contains all odds (since $|B+C|=3,$ by lemma $\ref{lp} (iv)).$ If $A$ is even parity type, then $3a+b+c+f=0.$ Since $A$ is even, $A \psi A \psi A$ is also an even number and  $B+C+F$ contains all evens, it follows that $0 \in A \psi A \psi A + B + C + F.$ Now if $A$ is odd parity type. Therefore $B$ is of odd parity type by assumption. Then we have $3b+a+d+e = 0.$  We have $A+D$ to contain all odds. Since $ B \psi B \psi B$ is odd and $A+D+E$ contains all odds, we have $ 0 \in  B \psi B \psi B + A + D + E.$ \\
\textbf{Sub case I.3.2b :} $A,C$ have the same parity type.\\
 We have $B$ and
$D$ are of same parity type. Therefore $B+D$ contains all evens and $A+D$ contains all odds. If $A$ is even parity type, then $B$ is of odd parity type. We have $3b+a+d+e=0$ and the set of possible second coordinates is $B \psi B \psi B+ A+D+E$. But $B \psi B \psi B$ is also odd and since $A+D+E$ contains all odds, we are done. Now if $A$ is odd parity type. Then we have $3c+a+d+e = 0.$ Since $ C \psi C \psi C$ is odd and $A+D+E$ contains all odds,  $ 0 \in C \psi C \psi C + A + D + E.$. \\
\textbf{Case I.3.3 : } Suppose $E$ is odd and $F$ is even.\\
\textbf{Sub case I.3.3a :} $A,B$ are of same parity type. \\ By assumption, $C$ and $D$ are of same parity type and therefore $A+D$ contains all odds. If $A$ is even parity type, then $B$ has even parity too. We have $3b+a+d+e=0.$   Since $B$ is even parity type, $B \psi B \psi B$ is also even and since $A+D+E$ contains all evens, it follows that $0 \in \ B \psi B \psi B + A+D+E.$  Now if $A$ is odd parity type, $C$ is of even parity. Then we have $3c+a+d+e = 0,$ and $ C \psi C \psi C$ is an even number and $A+D+E$ contains all evens, hence $0 \in C \psi C \psi C + A + D + E.$\\
\textbf{Sub case I.3.3b :} $A,C$ are of same parity type. \\ By assumption, $B$ and $D$ are of same parity type. Therefore $B+D$ contains all evens and $A+D$ contains all odds. If $A$ is even parity type, then we have $3c+b+d+f=0,$  $C \psi C \psi c$ is an even number and since $B+D+F$ contains all evens, it follows that $0 \in C \psi C \psi C + B+D+F.$ Now if $A$ is odd parity type, then $B$ is of even parity type. We have $3c+a+d+e = 0,$   $ C\psi C\psi C$ is an odd number  and $A+D+E$ contains all odds and so $ C \psi C \psi C + A + D + E$ contains all evens. Hence $ 0 \in C \psi C \psi C + A + D + E.$ \\
\textbf{Case I.3.4 :} If $E+F$ is an even number\\
\textbf{Sub case I.3.4a :} $A,B$ is of same parity.\\
If $A$ is  even parity type, we have $2c+a+b+e+f=0.$ Since, $C$ is odd parity type, $C\psi C$ is even parity type and $A+B+E+F$ contains all evens. Hence $ 0 \in 2^C+A+B+E+F$. If $A$ is odd parity type, $D$ and $C$ are of even parity type. We have $2c+a+b+e+f=0.$ Since $C \psi C$ is of even parity and $A+B$ is even parity implies $A+B+E+F$ is of even parity too, we are done.\\
\textbf{Sub case I.3.4b :} $A,C$ is of same parity.\\
We consider the following further sub cases,
$E$ is even, $F$ is even 
We have the same proof as the subcase $I.3.2b$, since we have only used the condition $E$ is even.\\
$E$ is odd, $F$ is odd
If $A$ is of even parity type, then $B$ is odd parity. The first coordinate $3a+b+c+f$ is $0.$ We know that $A\psi^2 A$ is an even number and hence $A\psi^2 A+C$ is even parity type. Also, $B+F$ is even parity type, we are done.
If $A$ is odd parity type, then $B$ and $D$ are even parity types and therefore $B+D$ is even parity type. We have $3c+b+d+f = 0.$ 
 We know $C$ is an odd parity type, implies $C \psi^2 C $ is an odd number and hence $C \psi^2 C +F$ is an odd number. Hence $ 0 \in C \psi^2 C + B+D+F.$ \\
\textbf{Case II : $d = f+3$}
The row values of $A,B,C,E$ have to be $f+1,f+2,f-1,f-2$ in some order. There must be two among $a,b,c$ of the form $f+x$ and $f-x$ where $x = 1$ or $2.$  WLOG let $a=f+x, b=f-x$. \\
\textbf{Case II.1 :} If all sets $A,B,C,D$ are form $\two.$ Then we can verify that $2d+2a+2b = 0.$ Since we know that $A \psi A$, $ B \psi B$ contain all evens and $D \psi D$ is an even number. Therefore $D \psi D + A \psi A + B \psi B$ contains all evens and hence contains zero.\\
\textbf{Case II.2 :}  At least one of $A,B,C,D$ is not of form $\two.$
 We consider the following. $a+3$ can't be $b$. Hence $a+3=c$ or $a+3=e.$  We have $2a+b+d+e+f=0$ or $2a+b+c+d+f=0.$
 If $A$ is not form B, so is $A\psi A$. Therefore, $|A\psi A+D|\geq4.$ Since $|B|=3,$ by Lemma $\ref{main},$  $ 0 \in A\psi A+B+D+E+F$ or $A\psi A+B+C+D+F.$
 
 If $B$ or $D$ is not form $\two,$ then $|B+D|\geq 4$ and $|A\psi A|=3.$ Hence both the sets must contain all elements of $\ZZ_6.$ 
 If $C$ is not form B, $|B+C|\geq 4$ and $|A\psi A|=3.$  Hence, if $a+3=e$, $ 0 \in A\psi A+B+C+D+F,$  by Lemma $\ref{main}.$ If $a+3=c$, then $b+3=e.$ We have $2b+a+c+d+f=0$. The second coordinates can be only in $B\psi B+A+C+D+F$. However, 
 $|A|=3$ and $|C+D|\geq4,$ by Lemma $\ref{lp}, (iv).$ Therefore $0 \in B\psi B+A+C+D+F.$\\
\textbf{Note:} The cases we have handled so far assumed the distributions happen in rows. However same proofs hold for columns as well.
\begin{center}
\textbf{Case $38$ \ : \ 3+3+3+3+1}
\end{center}
The rows with 3 elements be values $a, b, c$ and $d,$ and the one with single element be $e.$ Let the row that was not chosen be $f.$ 
\textbf{Case I : $A,B,C,D$ are of form $\two.$ }\\
WLOG let the elements of  $A,B,C,D$ be even. Let it be $$(a,0),(a,2),(a,4),(b,0),(b,2),(b,4),(c,0),(c,2),(c,4),(d,0),(d,2),(d,4).$$ Look at the column distribution. It is distributed in the following way. Four elements each in columns $0,2$ and $4$, that is, $4+4+4$. Hence we are done by $3+4+4$ case.
\\
We have $A,B,C,D$ are of form $\two.$ Let three among $A,B,C,D$ are of one parity and the fourth is of another parity. Then we again look at the column distribution. The $12$ elements are arranged in $3+3+3+1+1+1$ configuration. Hence the thirteen elements can either be in $4+3+3+1+1+1$ or $3+3+3+2+1+1$ both of these cases, we have already shown to contain a zero six sum.
\\
If two rows have second coordinates of one parity and the rest have second coordinates of another, then looking at the column distribution, it is $2+2+2+2+2+2$ distribution. The $13$ elements will go in some column making the column distribution $3+2+2+2+2+2,$ which we have proved already.\\
{\textbf Case II : } At least one of $A,B,C,D$ is not of form $\two.$
WLOG Let $A$ be not of form $\two.$\\ 
{\textbf Case II.1 : $e=f+3$}\\
If $e=f+3$, then WLOG $a=f+1, b= f+2, c=f+4$ and $d=f+5.$ Hence $2(b+c) + a+d =0.$ Then by Lemma $\ref{lp} (iv),$ $|A+D| \geq 4,$ and $|B \psi B+C \psi C| \geq 3.$ Hence $0 \in B \psi B+C \psi C +A +D.$\\ 
{\textbf{Case II.2 : $e \neq f+3$}\\
If $e\neq f+3,$  then $f+3 =a $  or $f+3=b.$ Then $2a+b+c+d+e=0$ or $a+2b+c+d+e=0.$  For the first case, $|A\psi A+B| \geq 4,$ and $|C+D+E| \geq 3.$ Therefore by Lemma $\ref{main},$ $0 \in A \psi A+B+C+D+E.$ For the second case, $|A+B\psi B| \geq 4,$
and $|C+D+E| \geq 3.$ Therefore $0 \in A+B\psi B+C+D+E.$ 
 \begin{center} 
 \textbf{Case $39$ \ : \ 3+3+3+2+2}
 \end{center}
\textbf{Case I}: f =a+3.
We have 2a+b+c+d+e=0. Consider $A \psi A+B+C+D+E$.\\
\textbf{Case I.I}: If atleast one is not of form $\two$\\ WLOG, A is not of form $\two$, we have $|A \psi A| =3 $, and also $A \psi A$ is not of form $\two$. Therefore, it's clear that $|A \psi A + B| \geq 4$ and $|C+D+E| \geq 3$. Therefore by Lemma $\ref{main},$ it follows that $0 \in A \psi A+B+C+D+E $. \\
\textbf{Case I.II} All are of form $\two$.\\
Let $A,B,C$ be of one parity type (say even) and $D,E$ be of another parity type(odd). Then $ A \psi A +B+C$ contains all evens and $D+E$ contains two even numbers. Hence, we have a zero in this case.
Suppose all are of same parity type. Then, it's column wise distribution looks like $5+5+3$, which follows from the case $3+3+4$. \\
Suppose $A,B,C$ are of the same parity type and $D,E$ are of different parity type. Then it's column wise distribution looks like $4+4+3+1+1$, which follows from the case $3+3+4$. \\
Suppose two in $A,B,C$ are of the same parity type and $D,E$ are of same parity type.  Then the column distributions will look like $4+4+2+1+1+1$ or $4+3+3+1+1+1$ or $3+3+2+2+2+1$ or $3+2+2+2+2+2$, all which we have showed that there exists a zero six sum.\\
Suppose two in $A,B,C$ are of the same parity type and $D,E$ are of different parity type. Then it's column wise distribution looks like $3+3+2+2+2+1$, which we have proved already.\\
\textbf{Case II} f=d+3. We have $2d+a+b+c+e=0$, suppose atleast one of $A,B,C,D,E$ be not of form $\two$. 
WLOG let $D$ be not of form $\two$. Then WLOG let $a,b,c,e$ take values $f+1,f+2,f+4,f+5$. If one of $A,B,C,E $ are not of form $\two$, then we have $ 0 \in D \psi D+A+B+C+E$. Therefore, assume all are of form $\two$. Now, if all $A,B,C,E$ are of same parity type, then $2(b+c)+a+e=0$ implies that $B \psi B +C \psi C $ contains all evens and $A+E$ is an even number and hence we have $ 0 \in B \psi B +C \psi C+A+E$. Suppose, one of $A,B,C,E$ is of different parity type. Then $3b+d+e+c = 0$ or $3c+d+a+b=0$. Since, $D$ contains an even and an odd, we can say that $0 \in B \psi B \psi B +D+E+C$ or $0 \in C \psi C \psi C +D+A+B$. Suppose all are of form $\two$.
If $A,B,C$ are of different parity type (say even) from $D,E$(say odd). Then, considering $3b+d+e+c=0$, we have that $B \psi B \psi B+ C$ contains all evens and $D+E$ contains atleast one even. Hence, we have $ 0 \in B \psi B \psi B+ D+E+C.$
For the rest of the cases, we have the same proof as in the case $f=a+3$, where all were of form $\two.$
\section*{Acknowledgements}
We would like to thank IISER, TVM for providing excellent working conditions. 
We thank T. Kathiravan and Pasupulati Sunil Kumar for their careful reading of this paper.

\end{document}